\newtheorem{theorem}{Theorem}[section]
\newtheorem{corollary}{Corollary}[section]
\newtheorem{lemma}{Lemma}[section]
\newtheorem{proposition}{Proposition}[section]
\newtheorem{definition}{Definition}[section]
\newtheorem{example}{Example}[section]
\newtheorem{remark}{Remark}[section]
\def\R{{\mathfrak R}\, }
\def\D{{\mathfrak D}\, }
\begin{document}
\begin{center}{\bf \LARGE Lie Maps on Alternative Rings}\\
\vspace{.2in}
\noindent {\bf Bruno Leonardo Macedo Ferreira}\\
{\it Technological Federal University of Paran\'{a},\\
Professora Laura Pacheco Bastos Avenue, 800,\\
85053-510, Guarapuava, Brazil.}\\
e-mail: brunoferreira@utfpr.edu.br
\\
and
\\
\noindent {\bf Henrique Guzzo Jr.}\\
{\it Institute of Mathematics and Statistics\\
University of S\~{a}o Paulo,\\
Mat\~{a}o Street, 1010,\\   
05508-090, S\~{a}o Paulo, Brazil.}\\
e-mail: guzzo@ime.usp.br

\end{center}
%\begin{color}{red}
\begin{abstract} In this paper we study to alternative rings the almost additivity of the Lie multiplicative and Lie triple derivable maps.
\end{abstract}
{\bf {\it AMS 2010 Subject Classification:}} 17A36, 17D05\\
{\bf {\it Keywords:}} Additivity; Lie multiplicative maps; Lie triple derivable maps; Prime alternative rings\\
{\bf {\it Running Head:}} Lie Multiplicative and Lie triple derivable maps
%\end{color}
\section{Alternative rings and Lie multiplicative maps}

Let $\mathfrak{R}$ be a ring not necessarily associative or commutative and consider the following convention for its multiplication operation: $xy\cdot z = (xy)z$ and $x\cdot yz = x(yz)$ for $x,y,z\in \mathfrak{R}$, to reduce the number of parentheses. We denote the {\it associator} of $\mathfrak{R}$ by $(x,y,z)=xy\cdot z-x\cdot yz$ for $x,y,z\in \mathfrak{R}$. And $[x,y] = xy - yx$ is the usual Lie product of $x$ and $y$, with $x,y \in \mathfrak{R}$.

Let $\mathfrak{R}$ and $\mathfrak{R}'$ be two rings and $\varphi:\mathfrak{R}\rightarrow \mathfrak{R}'$ a map of $\mathfrak{R}$ in $\mathfrak{R}'$. We call $\varphi$ a {\it Lie multiplicative map} of $\mathfrak{R}$ in $\mathfrak{R}'$ if for all $x, y \in \mathfrak{R}$ 
\begin{eqnarray*}
\varphi\big([x,y]) = [\varphi(x),\varphi(y)].
\end{eqnarray*}
And let $\mathfrak{R}$ be a ring and $\D:\mathfrak{R}\rightarrow \mathfrak{R}$ a map of $\mathfrak{R}$ into itself. We call $\D$ a {\it Lie triple derivable multiplicative map} of $\mathfrak{R}$ into itself if for all $x, y, z \in \mathfrak{R}$ 
\begin{eqnarray*}
\D\big([[x,y],z]) = [[\D(x),y],z] + [[x,\D(y)],z] + [[x,y],\D(z)].
\end{eqnarray*}
And if $\D([x,y]) = [\D(x), y] + [x,\D(y)]$ for all $x, y \in \R$ we say that $\D : \R \rightarrow \R$ is a \textit{Lie derivable multiplicative map}. 

A ring $\mathfrak{R}$ is said to be {\it alternative} if $(x,x,y)=0=(y,x,x)$ for all $x,y\in \mathfrak{R}$. One easily sees that any associative ring is an alternative ring. An alternative ring $\mathfrak{R}$ is called {\it k-torsion free} if $k\,x=0$ implies $x=0,$ for any $x\in \mathfrak{R},$ where $k\in{\mathbb Z},\, k>0$, and {\it prime} if $\mathfrak{AB} \neq 0$ for any two nonzero ideals $\mathfrak{A},\mathfrak{B}\subseteq \mathfrak{R}$.
The {\it nucleus} of an alternative ring $\mathfrak{R}$ \mbox{is defined by} $$\mathcal{N}(\mathfrak{R})=\{r\in \mathfrak{R}\mid (x,y,u)=0=(x,u,y)=(u,x,y) \hbox{ for all }x,y\in \mathfrak{R}\}.$$
And the {\it centre} of an alternative ring $\mathfrak{R}$ \mbox{is defined by} $$\mathcal{Z}(\mathfrak{R})=\{r\in \mathcal{N}\mid [r, x] = 0 \hbox{ for all }x \in \mathfrak{R}\}.$$
\begin{theorem}\label{meu}
Let $\mathfrak{R}$ be a $3$-torsion free alternative ring. So
$\mathfrak{R}$ is a prime ring if and only if $a\mathfrak{R} \cdot b=0$ (or $a \cdot \mathfrak{R}b=0$) implies $a = 0$ or $b =0$ for $a, b \in \mathfrak{R}$. 
\end{theorem}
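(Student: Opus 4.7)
The plan is to prove the equivalence by two separate arguments: the $(\Leftarrow)$ direction is essentially the same as in the associative setting, while the $(\Rightarrow)$ direction requires exploiting the alternative identities to push the vanishing hypothesis up to the ideal level.

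For $(\Leftarrow)$ I would argue contrapositively. Assume $\mathfrak{R}$ is not prime, so there exist nonzero ideals $\mathfrak{A}, \mathfrak{B}$ with $\mathfrak{A}\mathfrak{B} = 0$. Pick $0 \ne a \in \mathfrak{A}$ and $0 \ne b \in \mathfrak{B}$. Since $\mathfrak{A}$ is a (two-sided) ideal, $a\mathfrak{R} \subseteq \mathfrak{A}$, hence $(a\mathfrak{R})\cdot b \subseteq \mathfrak{A}\mathfrak{B} = 0$; analogously $\mathfrak{R}b \subseteq \mathfrak{B}$ gives $a\cdot(\mathfrak{R}b) \subseteq \mathfrak{A}\mathfrak{B} = 0$. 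This contradicts the assumed condition with $a,b$ both nonzero, so $\mathfrak{R}$ must be prime.

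For $(\Rightarrow)$ assume $\mathfrak{R}$ is prime and, without loss of generality, that $a\mathfrak{R}\cdot b = 0$. The goal is to produce two-sided ideals $\mathfrak{I}_a \ni a$ and $\mathfrak{I}_b \ni b$ with $\mathfrak{I}_a\mathfrak{I}_b = 0$, so that primeness yields $\mathfrak{I}_a = 0$ or $\mathfrak{I}_b = 0$, and hence $a = 0$ or $b = 0$. My strategy is first to enlarge the identity $(ar)b = 0$ into a family of zero-products: using that the associator $(x,y,z)$ is alternating on an alternative ring, I would derive from $(a,r,b) = (ar)b - a(rb)$ relations such as $a(rb) = (ra)b - r(ab)$ and, by iterating, analogous identities for $((ar)s)b$, $(a\cdot rs)b$, $(ra)\cdot sb$, etc. Linearizing the left and right alternative laws gives $(x,y,z)+(y,x,z)=0$ and $(x,y,z)+(x,z,y)=0$, and when these are combined the resulting integer coefficients are cleared by the $3$-torsion-freeness hypothesis. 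In parallel, the Moufang identity $(xy)(zx)=x(yz)x$ and flexibility let me rewrite mixed products of the form $(a r)(s b)$ in terms of the base relation, so that products of any element of $\mathfrak{R}a+a\mathfrak{R}+\mathfrak{R}(a\mathfrak{R})+(\mathfrak{R}a)\mathfrak{R}+\cdots$ with any element of the analogous set generated by $b$ vanish.

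The main obstacle is precisely this last sentence. In an alternative ring the two-sided ideal generated by $a$ is not just $\mathbb{Z}a + \mathfrak{R}a + a\mathfrak{R} + \mathfrak{R}a\mathfrak{R}$, because without associativity one must also include $(ra)s$, $r(as)$, and longer nested products in all possible parenthesizations. Showing that every such nested product from $\mathfrak{I}_a$ annihilates every corresponding product from $\mathfrak{I}_b$ is where all the work goes: each new nesting on either side introduces an extra associator, and the proof amounts to a careful inductive bookkeeping that uses the alternating behavior of the associator, the Moufang identities, and the $3$-torsion assumption to collapse those associators back to the original hypothesis $a\mathfrak{R}\cdot b = 0$. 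Once this is achieved the conclusion is immediate from primeness.
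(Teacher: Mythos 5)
Your $(\Leftarrow)$ direction is fine, and in fact more explicit than the paper, which dismisses it with ``clearly'': the contrapositive argument via $a\mathfrak{R}\subseteq\mathfrak{A}$, hence $a\mathfrak{R}\cdot b\subseteq\mathfrak{A}\mathfrak{B}=0$, is exactly right. The problem is the $(\Rightarrow)$ direction, where your proposal is a strategy rather than a proof, and you say so yourself: the plan is to show that the two-sided ideal generated by $a$ annihilates the one generated by $b$, and you acknowledge that verifying this for all nested parenthesizations ``is where all the work goes,'' deferring it to an unperformed ``careful inductive bookkeeping.'' That deferred step is not bookkeeping; it is the entire content of the theorem. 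In an alternative ring, passing from $a\mathfrak{R}\cdot b=0$ to statements about products of nested elements like $(ra)s$ and $r(as)$ introduces associators that do not collapse back to the base relation by linearized alternativity and Moufang identities in any known elementary way, and no one has exhibited such a direct computation. Your remark that $3$-torsion-freeness is used ``to clear integer coefficients'' is also a red flag: you never identify where a factor of $3$ actually arises, which suggests the hypothesis is being invoked as a talisman rather than used.

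The paper closes this gap quite differently: it invokes Slater's structure theory of prime alternative rings (his Lemma 2.4, Theorem A, and Proposition 3.5), which provides a descending chain of subrings $\mathfrak{R}=\mathcal{A}_0\supseteq\mathcal{A}_1\supseteq\cdots\supseteq\mathcal{A}_n=\mathcal{A}\neq(0)$ such that $a\mathfrak{R}\cdot b=0$ forces $a\mathcal{A}\cdot b=0$, and Slater's Proposition 3.5(e) then yields $a=0$ or $b=0$. The $3$-torsion-freeness enters precisely through that structure theory (prime alternative rings behave exceptionally in characteristic $3$), not through clearing coefficients in linearized identities. So to repair your proof you should either carry out the full inductive argument you sketch (which is not known to be feasible) or, as the paper does, reduce to the deep results already in the literature on prime alternative rings.
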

\begin{proof}
Clearly all alternative rings satisfying the properties $a\mathfrak{R} \cdot b=0$ (or $a \cdot \mathfrak{R}b=0$) are prime rings.
Suppose $\mathfrak{R}$ is a prime ring by \cite[Lemma $2.4$, Theorem $A$ and Proposition $3.5$]{slater} we have $\mathfrak{R} = \mathcal{A}_{0} \supseteq \mathcal{A}_{1} \supseteq \cdots \supseteq \mathcal{A}_{n} = \mathcal{A} \neq (0)$ is a chain of subrings of $\mathfrak{R}$. If $a\mathfrak{R} \cdot b=0$ (or $a \cdot \mathfrak{R}b=0$) hence $a\mathcal{A} \cdot b=0$ (or $a \cdot \mathcal{A}b=0$) follows \cite[Proposition 3.5 (e)]{slater} that $a= 0$ or $b=0$. 
 \end{proof}

\begin{definition}
A ring $\R$ is said to be flexible if satisfies
$$(x,y,x) = 0 \ \ for \ all \ x,y \in \R.$$
It is known that alternative rings are flexible.
\end{definition}

\begin{proposition}\label{Lflexivel}
Let $\R$ be a alternative ring then $\R$ satisfies 
$$(x,y,z) + (z,y,x) = 0 \ \ for \ \ all \ x,y,z \in \R.$$
\end{proposition}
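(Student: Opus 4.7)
The plan is to derive the identity by linearizing the flexible law $(x,y,x) = 0$, which by the previous definition holds in every alternative ring.

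First I would note that the associator $(u,v,w) = uv\cdot w - u\cdot vw$ is additive (indeed $\mathbb{Z}$-trilinear) in each of its three slots, because multiplication in $\R$ distributes over addition. This is the only structural fact beyond flexibility that the argument needs.

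Next I would apply the flexible law to the element $x + z$ in place of $x$, obtaining $(x+z, y, x+z) = 0$. Expanding the left-hand side using trilinearity of the associator in the first and third arguments yields
\begin{equation*}
(x,y,x) + (x,y,z) + (z,y,x) + (z,y,z) = 0.
\end{equation*}
Since $(x,y,x) = 0$ and $(z,y,z) = 0$ by flexibility, the identity collapses to $(x,y,z) + (z,y,x) = 0$, as required.

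There is no real obstacle here: the whole proof is a one-line polarization of the flexible identity, and the only thing to verify carefully is the expansion of $(x+z,y,x+z)$ via additivity of the associator in its first and third slots. I would present it exactly in that order: state trilinearity, substitute $x \mapsto x+z$ in the flexible law, expand, and cancel the two flexible-law terms.
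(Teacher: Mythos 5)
Your proposal is correct and is exactly the paper's argument: the paper's proof reads ``Just linearize the identity $(x,y,x)=0$,'' and your substitution of $x+z$ into the flexible law followed by expansion via additivity of the associator is precisely that linearization, spelled out in full.
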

\begin{proof} Just linearize the identity $(x,y,x) = 0$.
\end{proof}
A nonzero element $e_{1}\in \mathfrak{R}$ is called an {\it idempotent} if $e_{1}e_{1}=e_{1}$ and a {\it nontrivial idempotent} if it is an idempotent different from the multiplicative identity element of $\mathfrak{R}$. Let us consider $\mathfrak{R}$ an alternative ring and fix a nontrivial idempotent $e_{1}\in\mathfrak{R}$. Let \mbox{$e_2 \colon\mathfrak{R}\rightarrow\mathfrak{R}$} and $e'_2 \colon\mathfrak{R}\rightarrow\mathfrak{R}$ be linear operators given by $e_2(a)=a-e_1a$ and $e_2'(a)=a-ae_1.$ Clearly $e_2^2=e_2,$ $(e_2')^2=e_2'$ and we note that if $\mathfrak{R}$ has a unity, then we can consider $e_2=1-e_1\in \mathfrak{R}$. Let us denote $e_2(a)$ by $e_2a$ and $e_2'(a)$ by $ae_2$. It is easy to see that $e_ia\cdot e_j=e_i\cdot ae_j~(i,j=1,2)$ for all $a\in \mathfrak{R}$. Then $\mathfrak{R}$ has a Peirce decomposition
$\mathfrak{R}=\mathfrak{R}_{11}\oplus \mathfrak{R}_{12}\oplus
\mathfrak{R}_{21}\oplus \mathfrak{R}_{22},$ where
$\mathfrak{R}_{ij}=e_{i}\mathfrak{R}e_{j}$ $(i,j=1,2)$ \cite{He}, satisfying the following multiplicative relations:
\begin{enumerate}\label{asquatro}
\item [\it (i)] $\mathfrak{R}_{ij}\mathfrak{R}_{jl}\subseteq\mathfrak{R}_{il}\
(i,j,l=1,2);$
\item [\it (ii)] $\mathfrak{R}_{ij}\mathfrak{R}_{ij}\subseteq \mathfrak{R}_{ji}\
(i,j=1,2);$
\item [\it (iii)] $\mathfrak{R}_{ij}\mathfrak{R}_{kl}=0,$ if $j\neq k$ and
$(i,j)\neq (k,l),\ (i,j,k,l=1,2);$
\item [\it (iv)] $x_{ij}^{2}=0,$ for all $x_{ij}\in \mathfrak{R}_{ij}\ (i,j=1,2;~i\neq j).$
\end{enumerate}

The first result about the additivity of maps on rings was given by Martindale III \cite{Mart}. He established a condition on a ring $\mathfrak{R}$ such that every multiplicative isomorphism on $\mathfrak{R}$ is additive. Ferreira and Ferreira \cite{Fer} also
considered this question in the context of $n$-multiplicative maps on alternative rings
satisfying Martindale's conditions. They proved the following theorems.

\begin{theorem}\cite[Theorem 2.2.]{Fer} Let $\mathfrak{R}$ and $\mathfrak{R}'$ be two alternative rings. Suppose that $\mathfrak{R}$ is a ring containing a family $\{e_{\alpha}|\alpha\in \Lambda\}$ of idempotents which satisfies:
\begin{enumerate}
\item[\it (i)] If $x\in \mathfrak{R}$ is such that $x\mathfrak{R}=0,$ then $x = 0$;
\item[\it (ii)] If $x\in \mathfrak{R}$ is such that $e_{\alpha}\mathfrak{R}\cdot x = 0$ (or $e_{\alpha}\cdot \mathfrak{R}x = 0$) for all $\alpha\in \Lambda,$ then $x = 0$ (and hence $\mathfrak{R}x=0$ implies $x = 0$);
\item[\it (iii)] For each $\alpha \in \Lambda$ and $x\in \mathfrak{R},$ if $(e_{\alpha}xe_{\alpha})\cdot \mathfrak{R}(1-e_{\alpha})=0$ then $e_{\alpha}xe_{\alpha}=0$.
\end{enumerate}
Then every $n$-multiplicative isomorphism $\varphi$ of $\mathfrak{R}$ onto an arbitrary ring $\mathfrak{R}'$ is additive.
\end{theorem}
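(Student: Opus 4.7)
The plan is to follow a Martindale-style additivity technique, adapted to the non-associative alternative setting. Fix arbitrary $a, b \in \mathfrak{R}$; since $\varphi$ is bijective, set $t = \varphi^{-1}(\varphi(a)+\varphi(b))$, so the problem reduces to proving $t = a+b$. I would establish this Peirce-component by Peirce-component, relative to each idempotent $e_\alpha$ in the given family, and then invoke condition (ii) to pass from ``every $e_\alpha$-Peirce component of $t-(a+b)$ vanishes'' to $t-(a+b) = 0$. As a preliminary step, $\varphi(0) = 0$ follows from substituting $x_1 = \cdots = x_n = 0$ in the $n$-multiplicativity identity and using bijectivity together with condition (i).

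Fix $e = e_\alpha$ and use the Peirce decomposition $\mathfrak{R} = \mathfrak{R}_{11}\oplus\mathfrak{R}_{12}\oplus\mathfrak{R}_{21}\oplus\mathfrak{R}_{22}$ described in the excerpt. For the off-diagonal blocks I would isolate the $(1,2)$ and $(2,1)$ components by constructing $n$-fold products of the form $e_i \cdots e_j$; the identity $e_i a\cdot e_j = e_i\cdot a e_j$ together with the Peirce rules (i)--(iv) allow such products to be rearranged without spurious associators appearing. Transferring the equality $\varphi(t) = \varphi(a)+\varphi(b)$ through these products via $n$-multiplicativity, and then back through $\varphi^{-1}$, yields $e_1 t\cdot e_2 = e_1(a+b)\cdot e_2$ and $e_2 t\cdot e_1 = e_2(a+b)\cdot e_1$.

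The diagonal blocks $\mathfrak{R}_{ii}$ are the main technical obstacle, because projecting by $e_i$ on both sides does not by itself force equality of the diagonal components. The Martindale remedy is to test $e_i t e_i - e_i(a+b) e_i$ against an arbitrary $r\in\mathfrak{R}$: one constructs an $n$-multiplicative expression whose vanishing follows from the off-diagonal additivity already proved, arriving at $(e_i t e_i - e_i(a+b) e_i)\cdot \mathfrak{R}(1-e_i) = 0$, whereupon condition (iii) kills the left-hand factor. Proposition~\ref{Lflexivel} and the Peirce rules must be deployed throughout to control the associators that non-associativity introduces. Once all four blocks vanish for every $e_\alpha$, condition (ii) combined with condition (i) delivers $t-(a+b) = 0$, finishing the proof.

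The hard part is the diagonal step: producing, for each $\mathfrak{R}_{ii}$, an explicit $n$-fold product that simultaneously isolates that block, is compatible with $\varphi$ in the additive sense required, and avoids unwanted associators. The overall skeleton is standard Martindale, but the alternative setting forces a small library of associator identities to be carried throughout the computation.
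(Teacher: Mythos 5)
This statement is not proved in the paper at all: it is quoted as background from Ferreira--Ferreira \cite{Fer}, so your attempt has to be measured against the staged Martindale-type argument given there, and against that standard your outline has genuine gaps rather than just missing polish. The central one is a circularity in your componentwise reduction. Setting $t = \varphi^{-1}(\varphi(a)+\varphi(b))$ for \emph{arbitrary} $a,b$ and trying to verify $t = a+b$ block by block cannot work in one pass: $n$-multiplicativity and distributivity in $\mathfrak{R}'$ do give you equalities of the form $\varphi(e_1t\cdot e_2) = \varphi(e_1a\cdot e_2) + \varphi(e_1b\cdot e_2)$, but to convert the right-hand side into $\varphi(e_1a\cdot e_2 + e_1b\cdot e_2)$ and then invoke injectivity, you must already know that $\varphi$ is additive on $\mathfrak{R}_{12}$ --- which is part of what is being proved. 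This is exactly why Martindale's proof and its alternative-ring adaptation are organized as a ladder of partial-additivity lemmas (additivity on sums of the form $a_{11}+b_{12}$, then on each off-diagonal block $\mathfrak{R}_{ij}$ via identities exploiting $x_{ij}^2=0$, then on the diagonal blocks, then in general), with each rung using the previous ones; a single global componentwise verification has no base to stand on.

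Two further problems. First, condition \textit{(iii)} hypothesizes only about $e_\alpha x e_\alpha$, the $(1,1)$ Peirce block of $e_\alpha$; there is no symmetric hypothesis for $(1-e_\alpha)x(1-e_\alpha)$, and $1-e_\alpha$ need not lie in the family --- indeed $\mathfrak{R}$ need not have a unity, so $e_2$ is only the operator $a\mapsto a - e_\alpha a$, not an element that can serve as a factor in your proposed $n$-fold products. Hence your plan to ``kill the left-hand factor'' of both diagonal blocks by condition \textit{(iii)} fails for the $(2,2)$ block; that block has to be handled by quantifying over the whole family $\{e_\alpha\}$ and using condition \textit{(ii)}, which is precisely why the theorem is stated for a family of idempotents rather than a single one. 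Second, you explicitly defer ``producing an explicit $n$-fold product that isolates the block'' --- but constructing those products and controlling their associators is the entire technical content of the theorem in the alternative, $n$-multiplicative setting, so what remains is a plan rather than a proof. (A minor point in the other direction: $\varphi(0)=0$ needs only surjectivity and $n$-multiplicativity, taking the remaining factors to be $\varphi^{-1}(0)$; condition \textit{(i)} plays no role there.)
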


Changjing and Quanyuan \cite{chang} and Changjing et al. considered also the investigation of the almost additivity of maps for
the case of Lie multiplicative maps and Lie triple derivable maps on associative rings. They proved the
following theorem.

\begin{theorem}\label{cha}
Let $\mathfrak{R}$ be a ring containing a nontrivial idempotent $P$ and satisfying the
following condition: $(\mathbb{Q})$ If $A_{11}B_{12} = B_{12}A_{22}$ for all $B_{12} \in \mathfrak{R}_{12}$, then $A_{11} + A_{22} \in \mathcal{Z}(\mathfrak{R})$. Let $\mathfrak{R}'$ be another ring. Suppose that a bijection map $\Phi: \mathfrak{R} \rightarrow \mathfrak{R}'$ satisfies
$$\Phi([A,B]) = [\Phi(A),\Phi(B)]$$
for all $A,B \in \mathfrak{R}$. Then $\Phi(A + B) = \Phi(A) + \Phi(B) + Z_{A,B}'$ for all $A,B \in \mathfrak{R}$, where $Z_{A,B}'$ is an element in the centre $\mathcal{Z}(\mathfrak{R}')$ of $\mathfrak{R}'$ depending on $A$ and $B$.
\end{theorem}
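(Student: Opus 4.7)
The plan is to exploit the Peirce decomposition of $\mathfrak{R}$ with respect to $P$ and $1-P$, namely $\mathfrak{R} = \mathfrak{R}_{11} \oplus \mathfrak{R}_{12} \oplus \mathfrak{R}_{21} \oplus \mathfrak{R}_{22}$, and to establish the conclusion first on pairs of homogeneous Peirce components, then bootstrap to arbitrary $A, B$. A preliminary observation is $\Phi(0) = \Phi([0,0]) = [\Phi(0), \Phi(0)] = 0$. The workhorse identity, available because $\Phi$ is a Lie multiplicative bijection, is obtained as follows: given $A, B \in \mathfrak{R}$, surjectivity produces $T \in \mathfrak{R}$ with $\Phi(T) = \Phi(A) + \Phi(B)$, and for every $X \in \mathfrak{R}$,
\[
\Phi([T,X]) = [\Phi(T),\Phi(X)] = [\Phi(A),\Phi(X)] + [\Phi(B),\Phi(X)] = \Phi([A,X]) + \Phi([B,X]).
\]
By injectivity, this reduces questions about the deviation $T - A - B$ to examining how the commutators $[T, X]$ behave for well-chosen $X$.

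I would then run a sequence of claims classified by Peirce type of the summands: strict additivity on $A_{12} + B_{12}$ (and symmetrically on $\mathfrak{R}_{21}$); on mixed off-diagonal pairs $A_{12} + B_{21}$; and on pairs $A_{ii} + B_{ij}$ with $i \neq j$. Each such claim uses the workhorse identity against carefully chosen Peirce-homogeneous test elements (typically $P$, $X_{12}$, $X_{21}$, or products thereof), together with the previously proven cases, to force the unwanted Peirce components of $T$ to vanish. The delicate case is the diagonal-diagonal pair $A_{11} + B_{22}$: picking $T$ with $\Phi(T) = \Phi(A_{11}) + \Phi(B_{22})$ and testing against an arbitrary $X_{12} \in \mathfrak{R}_{12}$, Peirce separation yields that the off-diagonal parts of $T$ vanish while the diagonal parts satisfy
\[
(T_{11} - A_{11})\, X_{12} = X_{12}\, (B_{22} - T_{22}) \quad \text{for all } X_{12} \in \mathfrak{R}_{12}.
\]
Hypothesis $(\mathbb{Q})$ then places $(T_{11} - A_{11}) + (T_{22} - B_{22}) \in \mathcal{Z}(\mathfrak{R})$, giving $T = A_{11} + B_{22} + Z$ with $Z$ central in $\mathfrak{R}$. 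An arbitrary pair $A, B$ is handled by decomposing each into Peirce components and accumulating the resulting central corrections into a single element $Z_{A,B} \in \mathcal{Z}(\mathfrak{R})$.

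Finally one must certify that the error $Z'_{A,B} := \Phi(A+B) - \Phi(A) - \Phi(B)$ is actually central in $\mathfrak{R}'$. This is achieved by running the workhorse identity in reverse: for any $X \in \mathfrak{R}$,
\[
[Z'_{A,B}, \Phi(X)] = \Phi([A+B, X]) - \Phi([A, X]) - \Phi([B, X]),
\]
and since $[A+B, X] = [A, X] + [B, X]$, expanding the right-hand side by the just-established Peirce additivity claims (whose central corrections are absorbed into images already lying in $\mathcal{Z}(\mathfrak{R}')$) collapses it to $0$. Surjectivity of $\Phi$ then upgrades this to $[Z'_{A,B}, y'] = 0$ for every $y' \in \mathfrak{R}'$, so $Z'_{A,B} \in \mathcal{Z}(\mathfrak{R}')$. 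The main obstacle throughout is the diagonal-diagonal claim together with the precise use of $(\mathbb{Q})$: this is the unique point at which Peirce testing cannot individually pin down $T_{11}$ and $T_{22}$, and $(\mathbb{Q})$ is the axiom tailored exactly to convert the residual $X_{12}$-commutation into centrality; threading it correctly through the bookkeeping, and ensuring central deviations in $\mathfrak{R}$ carry over to $\mathcal{Z}(\mathfrak{R}')$ under $\Phi$, is the technical heart of the argument.
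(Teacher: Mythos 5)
Your overall strategy coincides with the one this theorem receives in \cite{chang} and with the paper's own proof of its alternative-ring analogue, Theorem \ref{mainthm}: first $\Phi(0)=0$, then the ``workhorse'' consequence of bijectivity (the paper's Lemma \ref{Claim3}), then a case analysis over Peirce types culminating in the four-component lemma (the paper's Lemma \ref{lema8}), with condition $(\mathbb{Q})$ invoked exactly where the two diagonal components of the preimage $T$ can only be controlled jointly. Up to a sign slip (the $\mathfrak{R}_{12}$-component of $[T-A_{11}-B_{22},X_{12}]=0$ reads $(T_{11}-A_{11})X_{12}=X_{12}(T_{22}-B_{22})$, which is precisely the shape to which $(\mathbb{Q})$ applies), this part of your sketch is sound.

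The genuine gap is in your final paragraph. What your Peirce analysis proves lives on the domain side: the preimage $T$ of $\Phi(A)+\Phi(B)$ equals $A+B+Z$ with $Z\in\mathcal{Z}(\mathfrak{R})$. The theorem requires transporting this to the codomain, i.e.\ showing $\Phi(A+B+Z)-\Phi(A+B)\in\mathcal{Z}(\mathfrak{R}')$, and for this the paper has a dedicated bridge lemma (Lemma \ref{Claim2}): for $a\in\mathfrak{R}$ and $z\in\mathcal{Z}(\mathfrak{R})$ there exists $z'\in\mathcal{Z}(\mathfrak{R}')$ with $\Phi(a+z)=\Phi(a)+z'$. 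You never state or prove such a lemma, and your substitute argument is invalid: the identity $[Z'_{A,B},\Phi(X)]=\Phi([A+B,X])-\Phi([A,X])-\Phi([B,X])$ is correct, but the right-hand side does not ``collapse to $0$'' by your Peirce claims --- those claims only say it equals the central correction $Z'_{[A,X],[B,X]}\in\mathcal{Z}(\mathfrak{R}')$, not that it vanishes. So what you actually obtain is $[Z'_{A,B},\mathfrak{R}']\subseteq\mathcal{Z}(\mathfrak{R}')$, which in a general ring does not force $Z'_{A,B}\in\mathcal{Z}(\mathfrak{R}')$: in the unital span of $\{1,e_{12},e_{13},e_{23}\}$ inside the upper-triangular $3\times 3$ matrices one has $[e_{12},x]\in ke_{13}\subseteq\mathcal{Z}$ for every $x$ in the ring, yet $e_{12}$ is not central. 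The repair is exactly the paper's Lemma \ref{Claim2}, run through the preimage: since $Z$ is central in $\mathfrak{R}$, $[T,X]=[A+B,X]$ holds exactly, hence $[\Phi(A)+\Phi(B)-\Phi(A+B),\Phi(X)]=\Phi([T,X])-\Phi([A+B,X])=0$ for every $X\in\mathfrak{R}$, and surjectivity of $\Phi$ then yields $Z'_{A,B}\in\mathcal{Z}(\mathfrak{R}')$. With that lemma inserted (and the sign fixed), your argument matches the paper's.
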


and

\begin{theorem}\label{chad}
Let $\mathfrak{R}$ be a ring containing a nontrivial idempotent $P$ and satisfying the
following condition: $(\mathbb{Q})$ If $A_{11}B_{12} = B_{12}A_{22}$ for all $B_{12} \in \mathfrak{R}_{12}$, then $A_{11} + A_{22} \in \mathcal{Z}(\mathfrak{R})$. Suppose that a map $\delta: \mathfrak{R} \rightarrow \mathfrak{R}$ satisfies
$$\delta([[A,B],C]) = [[\delta(A),B],C] + [[A,\delta(B)],C] + [[A,B],\delta(C)]$$
for all $A,B, C \in \mathfrak{R}$. Then there exists a $Z_{A,B}$ (depending on A and B) in $\mathcal{Z}(\mathfrak{R})$ such that $\delta (A+B) = \delta(A) + \delta(B) + Z_{A,B}$.
\end{theorem}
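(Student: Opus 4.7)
The plan is to exploit the Peirce decomposition $\mathfrak{R} = \mathfrak{R}_{11} \oplus \mathfrak{R}_{12} \oplus \mathfrak{R}_{21} \oplus \mathfrak{R}_{22}$ relative to the nontrivial idempotent $P$, and to track the additivity obstruction $F(A,B) := \delta(A+B) - \delta(A) - \delta(B)$ component by component until condition $(\mathbb{Q})$ forces it into $\mathcal{Z}(\mathfrak{R})$.

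First I would extract elementary consequences of the Lie triple derivation identity by plugging in $A=B$, $A=P$, $B=P$, etc., obtaining $\delta(0)=0$ and simple rules for how $\delta$ interacts with double commutators involving $P$. Then I would establish additivity on each single Peirce piece $\mathfrak{R}_{ij}$: for $A_{ij}, B_{ij} \in \mathfrak{R}_{ij}$ with $i\neq j$, I would compute $\delta([[A_{ij}+B_{ij}, X], Y])$ for test elements $X, Y$ chosen judiciously from $\{P\}\cup\bigcup_{k,l}\mathfrak{R}_{kl}$. Applying the derivation identity on both sides of the defining relation $\delta(A_{ij}+B_{ij}) = \delta(A_{ij})+\delta(B_{ij})+F(A_{ij},B_{ij})$ yields identities of the form $[[F(A_{ij},B_{ij}), X], Y] = 0$, from which one reads off, piece by piece, that each Peirce component of $F(A_{ij},B_{ij})$ either vanishes or lies in $\mathcal{Z}(\mathfrak{R})$.

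After the relatively easy off-diagonal case, I would treat mixed sums such as $A_{ii}+B_{jj}$, $A_{ii}+B_{ij}$, and finally arbitrary linear combinations of Peirce components, reducing each to a case already handled via triple-commutator testing against $P$ and off-diagonal elements. The genuinely delicate case is the pure diagonal: for $A_{ii}, B_{ii} \in \mathfrak{R}_{ii}$, applying the derivation identity to $[[A_{ii}+B_{ii}, X_{12}], P]$ should produce the relation $F_{11} X_{12} = X_{12} F_{22}$ for every $X_{12} \in \mathfrak{R}_{12}$, where $F_{11}$ and $F_{22}$ denote the diagonal Peirce components of $F(A_{ii},B_{ii})$. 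At this point hypothesis $(\mathbb{Q})$ activates and guarantees $F_{11}+F_{22} \in \mathcal{Z}(\mathfrak{R})$; the off-diagonal components of $F$ in the diagonal case die under easier commutator probes with $P$.

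The main obstacle, as I see it, is precisely setting up this diagonal step: because $[[\cdot,\cdot],\cdot]$ mixes several Peirce components simultaneously and $\delta$ is not a priori additive, one must engineer the test triple so that, after applying the identity and subtracting $\delta(A_{ii})+\delta(B_{ii})$, only the targeted relation $F_{11} X_{12} = X_{12} F_{22}$ survives. Condition $(\mathbb{Q})$ is tailor-made for exactly this conclusion, but pinning down the correct triple is where the bookkeeping is heaviest. Once this is accomplished, one assembles the general statement by writing arbitrary $A, B$ in terms of their Peirce components, iterating the previously established partial additivity, and verifying at each intermediate step that the accumulated obstruction remains central.
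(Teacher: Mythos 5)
Your overall strategy---Peirce decomposition, tracking the obstruction $F(A,B)=\delta(A+B)-\delta(A)-\delta(B)$ through partial-additivity lemmas, invoking $(\mathbb{Q})$ at the diagonal step, and assembling at the end---is exactly the architecture of the proof in \cite{changd}, which the paper follows in its alternative-ring analogue (Theorem \ref{mainthmd} and Lemmas \ref{Claim1d}--\ref{lema8d}). The mixed cases ($A_{11}+B_{12}$, $A_{12}+B_{21}$, the full four-component sum) and the diagonal case do indeed go through by the ``testing'' mechanism you describe, and your identification of $(\mathbb{Q})$ as the engine of the diagonal step is correct.

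However, there is a genuine gap at the single off-diagonal piece, i.e. additivity on $A_{12}+B_{12}$ (and $A_{21}+B_{21}$), and it propagates: your diagonal step needs $\delta(A_{11}X_{12}+B_{11}X_{12})$ to split, which is precisely same-piece additivity in $\mathfrak{R}_{12}$. The testing mechanism cannot produce this case. To extract $[[F,X],Y]=0$ you must first know that $\delta$ splits (up to centre) on $[[A_{12},X],Y]+[[B_{12},X],Y]$; for $A_{12},B_{12}\in\mathfrak{R}_{12}$ every choice of $X,Y$ either returns elements of $\mathfrak{R}_{12}$ again (circular), lands both images in diagonal sums not yet handled, or makes both images vanish. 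The vanishing-image tests are not worthless---e.g. $[[A_{12}+B_{12},X_{12}],Y]=0$ yields $[F,X_{12}]\in\mathcal{Z}(\mathfrak{R})$, hence $F_{11}X_{12}=X_{12}F_{22}$ and, by $(\mathbb{Q})$, $F_{11}+F_{22}\in\mathcal{Z}(\mathfrak{R})$---but the off-diagonal components of $F$ survive: the best such tests give is annihilation relations like $F_{21}\mathfrak{R}_{12}=\mathfrak{R}_{12}F_{21}=0$, which under hypothesis $(\mathbb{Q})$ alone (no primeness is available here) do not force $F_{12}=F_{21}=0$, and a noncentral off-diagonal residue destroys both the conclusion and the bootstrapping of later cases. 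The missing idea is not a test but an explicit realization of the sum as a Lie triple product of elements whose $\delta$-images are already known to split: in the associative setting $\mathfrak{R}_{21}\mathfrak{R}_{21}=0$ gives
\begin{equation*}
A_{21}+B_{21}=[[P+A_{21},\,P-B_{21}],\,P],
\end{equation*}
and expanding $\delta$ of the right-hand side via the derivation identity, using the mixed-sum lemma $\delta(P+A_{21})=\delta(P)+\delta(A_{21})+Z$, yields \emph{exact} additivity on each off-diagonal piece (this is Lemma \ref{lema6d} of the paper, where the alternative setting requires the extra term $2A_{21}B_{21}$ since there $\mathfrak{R}_{21}\mathfrak{R}_{21}\subseteq\mathfrak{R}_{12}$ need not vanish). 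Note also that this forces the mixed sums to be proved \emph{before} the single pieces, the reverse of the order you propose.
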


It is noteworthy that the types of applications and the conditions usually vary
according to each problem. 

The hypotheses of the Changjing and Quanyuan's Theorem \cite{chang} and Changjing et al. \cite{changd} allowed the author to make its proof based on calculus using the Peirce decomposition notion for associative rings.
The notion of Peirce decomposition for the alternative rings is similar to the notion of Peirce decomposition for the associative rings. However, the similarity of this notion is only in its written form, but not in its theoretical structure because the Peirce decomposition for alternative rings is the generalization of the Peirce decomposition for associative rings. Taking this fact into account, in the present paper we investigated the main Changjing and Quanyuan's Theorem \cite{chang} and Changjing's et al. Theorem \cite{changd} to the class of alternative rings. For this, we adopt and follow the same structure of the demonstration presented in \cite{chang} and \cite{changd}, in order: to preserve the author ideas and to highlight the investigation of the associative results to the alternative results.
Therefore, our lemmas and the theorem that seem to be equal in written form with the lemmas and the theorem proposed in Changjing and Quanyuan \cite{chang} and Changjing's et al. \cite{changd}, are distinguished by a fundamental item: the use of the non-associative multiplications. The symbol ``$\cdot$", as defined in the introduction section of our article, is essential to elucidate how the non-associative multiplication should be done, and also the symbol ``$\cdot$" is used to simplify the notation. Therefore, the symbol ``$\cdot$" is crucial to the logic and characterization of associative results to the alternative results.

\section{Lie Multiplicative Maps}

\subsection{Auxiliary Lemmas} 

The three lemmas that follow, have identical proofs, as in \cite{chang}  (Claim $1$, Claim $2$ and Claim $3$ ). Thus,
they will be omitted.

\begin{lemma}\label{Claim1} $\varphi(0) = 0$.
\end{lemma}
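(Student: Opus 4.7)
The plan is to exploit the fact that $0$ is itself a Lie bracket in a trivial way, namely $[0,0] = 0 \cdot 0 - 0 \cdot 0 = 0$. Applying the defining property of the Lie multiplicative map $\varphi$ to the pair $(x,y) = (0,0)$ will immediately yield the conclusion, with no Peirce decomposition or alternative ring structure required.

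Concretely, I would compute
\[
\varphi(0) \;=\; \varphi([0,0]) \;=\; [\varphi(0),\varphi(0)] \;=\; \varphi(0)\varphi(0) - \varphi(0)\varphi(0) \;=\; 0,
\]
where the first equality uses $[0,0]=0$, the second is the Lie multiplicativity hypothesis, and the third is the definition of the commutator in $\mathfrak{R}'$. Since the resulting expression is a difference of a term with itself in the (not necessarily associative) ring $\mathfrak{R}'$, it vanishes regardless of associativity, so the argument works verbatim in the alternative setting and does not rely on any of the structural hypotheses on $\mathfrak{R}$ (idempotent, torsion freeness, primeness, etc.).

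There is no substantive obstacle here: the statement is the trivial base case that is customarily recorded as \textbf{Claim 1} in this line of arguments because subsequent lemmas repeatedly normalize values of $\varphi$ against $\varphi(0)$, and it is convenient to know in advance that this constant is zero. Accordingly, the proof reduces to the one-line computation above, which is exactly why the authors choose to omit it.
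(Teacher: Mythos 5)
Your proof is correct and is precisely the standard computation $\varphi(0)=\varphi([0,0])=[\varphi(0),\varphi(0)]=0$, which is the argument of Claim 1 in \cite{chang} that the paper invokes and omits; as you note, it needs none of the structural hypotheses and is unaffected by non-associativity.
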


\begin{lemma}\label{Claim2} For any $a \in \mathfrak{R}$ and $z \in \mathcal{Z}(\mathfrak{R})$, there exists $z' \in \mathcal{Z}(\mathfrak{R}')$ such that $\varphi(a + z) = \varphi(a) + z'$.
\end{lemma}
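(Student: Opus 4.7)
The natural candidate is $z' := \varphi(a+z) - \varphi(a)$; the task is to show $z' \in \mathcal{Z}(\mathfrak{R}')$. For the commutation half of centrality, I would observe that for any $y \in \mathfrak{R}$ the centrality of $z$ gives $[a+z,y] = [a,y]$, so applying the Lie multiplicative identity on both sides and subtracting yields
$$[z',\varphi(y)] = [\varphi(a+z),\varphi(y)] - [\varphi(a),\varphi(y)] = \varphi([a+z,y]) - \varphi([a,y]) = 0.$$
Invoking the bijectivity of $\varphi$ (which is part of the standing hypotheses of the main theorem the lemma feeds into), this gives $[z', x'] = 0$ for every $x' \in \mathfrak{R}'$.

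Because $\mathcal{Z}(\mathfrak{R}')$ as defined in this paper requires additionally that $z' \in \mathcal{N}(\mathfrak{R}')$, the second step of the plan is to establish $(z', x', y') = 0$ for all $x', y' \in \mathfrak{R}'$. My strategy is to apply $\varphi$ to the identity $[a+z,[y,w]] = [a,[y,w]]$ (again valid by centrality of $z$), expand both sides using the Lie multiplicative property together with the commutation conclusion from the first step, and then convert the resulting bracket relations into associator relations via Proposition \ref{Lflexivel} and the defining alternative identities $(x,x,y)=0=(y,x,x)$. The skew-symmetry $(u,v,w) + (w,v,u) = 0$ from Proposition \ref{Lflexivel} cuts the three nucleus conditions down to essentially one, which is then forced to vanish through bijectivity of $\varphi$ as $y,w$ range over $\mathfrak{R}$.

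The main obstacle is precisely this second step. In the associative setting of \cite{chang} the commutation conclusion alone already places $z'$ in the centre, so Claim~2 there is two lines long. In the alternative category nuclearity is a genuinely separate condition, and the translation from Lie-bracket information to associator information is where the non-associative multiplication ``$\cdot$'' that the authors emphasise in the introduction actually does work. I expect the bookkeeping here to be the crux, and to rely on repeated use of flexibility together with the Peirce multiplication rules $(i)$--$(iv)$ available to $\mathfrak{R}'$ via its corresponding idempotent structure.
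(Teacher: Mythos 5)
Your first step is, in fact, the whole of the paper's proof: the authors state that Lemma \ref{Claim2} has a proof identical to Claim 2 of \cite{chang}, and that argument is exactly your computation $[z',\varphi(y)]=\varphi([a+z,y])-\varphi([a,y])=0$ together with surjectivity of $\varphi$. In the associative setting of \cite{chang} this suffices, because there the centre is precisely the set of elements commuting with everything. You are also right that, with this paper's definition $\mathcal{Z}(\mathfrak{R}')=\{r\in\mathcal{N}(\mathfrak{R}')\mid [r,x]=0 \mbox{ for all } x\in\mathfrak{R}'\}$, nuclearity of $z'$ is an additional requirement which the paper's citation-proof never addresses; spotting that subtlety is to your credit.

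However, your second step is a genuine gap: it is a plan, not an argument, and the plan cannot be carried out as described. Applying $\varphi$ to bracket identities such as $[a+z,[y,w]]=[a,[y,w]]$ can only ever produce Lie-bracket relations among elements of $\mathfrak{R}'$, and you already possess the strongest such relation, namely $[z',\mathfrak{R}']=0$; further bracketing adds nothing. The one mechanism an alternative ring offers for converting bracket data into associator data is the identity (a consequence of the alternating property of the associator, i.e.\ of Proposition \ref{Lflexivel} together with the linearization of $(x,x,y)=0$)
$$[x'y',z']=x'[y',z']+[x',z']y'+3(x',y',z'),$$
and feeding $[z',\mathfrak{R}']=0$ into it yields exactly $3(z',x',y')=0$ for all $x',y'\in\mathfrak{R}'$ --- and no more. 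Hence $z'\in\mathcal{N}(\mathfrak{R}')$ follows only if $\mathfrak{R}'$ is assumed $3$-torsion free, an assumption that is not among the hypotheses of Theorem \ref{mainthm}. So either one adds that hypothesis, in which case your step two collapses to the single displayed identity (none of the anticipated Peirce bookkeeping is needed, and bijectivity of $\varphi$ plays no role in it), or the nuclearity of $z'$ is simply not derivable from the Lie-multiplicative hypothesis at all; in either case the proof as you propose it does not close.
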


\begin{lemma}\label{Claim3} Let $a,b,c \in \mathfrak{R}$ and $\varphi(c) = \varphi(a) + \varphi(b)$. Then for any $t, s \in \mathfrak{R}$, we have $\varphi([c,t]) = \varphi([a,t]) + \varphi([b,t])$ and $\varphi([[c,t],s]) = \varphi([[a,t],s]) + \varphi([[b,t],s])$.
\end{lemma}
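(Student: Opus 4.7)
The plan is to exploit two completely general facts: the Lie multiplicative hypothesis $\varphi([x,y]) = [\varphi(x),\varphi(y)]$, and the elementary observation that in any ring (associative or not) the commutator $[u,v] = uv - vu$ is biadditive, since it is built from two applications of the ring multiplication, each of which distributes over addition. Both identities should drop out from these two inputs alone, with no appeal to the Peirce structure, to the idempotent, or to alternativity.

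For the first identity I would start from
\[
\varphi([c,t]) = [\varphi(c),\varphi(t)]
\]
by Lie multiplicativity, substitute the hypothesis $\varphi(c) = \varphi(a) + \varphi(b)$ into the first slot, and then use additivity of the commutator in that slot to split the expression as $[\varphi(a),\varphi(t)] + [\varphi(b),\varphi(t)]$. Folding the two summands back through Lie multiplicativity gives exactly $\varphi([a,t]) + \varphi([b,t])$.

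For the second identity the idea is to iterate the same argument one outer layer at a time. First apply Lie multiplicativity to the outer bracket to get $\varphi([[c,t],s]) = [\varphi([c,t]),\varphi(s)]$. Now plug in the first identity just established, so that the first slot becomes the sum $\varphi([a,t]) + \varphi([b,t])$, and again invoke biadditivity of $[\cdot,\cdot]$ in the first slot to split the expression into $[\varphi([a,t]),\varphi(s)] + [\varphi([b,t]),\varphi(s)]$. One last application of Lie multiplicativity rewrites these summands as $\varphi([[a,t],s]) + \varphi([[b,t],s])$, as required.

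There is essentially no obstacle here. The statement is purely formal: it asks that a single ``defect'' $\varphi(c) - \varphi(a) - \varphi(b) = 0$ propagate through one and two nested commutators, and this is automatic because Lie multiplicativity converts each commutator on the $\mathfrak{R}$-side into an honest commutator on the $\mathfrak{R}'$-side, where additivity is free. The alternative/non-associative nature of $\mathfrak{R}$ plays no role in this particular step; it will only become relevant in later lemmas where products inside the Peirce components interact nontrivially.
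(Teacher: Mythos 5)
Your proof is correct and is essentially the argument the paper relies on: the paper omits this proof by referring to Claim~3 of the cited Changjing--Quanyuan paper, whose argument is exactly this one (apply Lie multiplicativity, substitute $\varphi(c)=\varphi(a)+\varphi(b)$, use biadditivity of $[\cdot,\cdot]$ in $\mathfrak{R}'$, and fold back). Your observation that non-associativity is irrelevant here --- since the commutator is biadditive by distributivity alone --- is precisely why the paper can reuse the associative-case proof verbatim.
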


\section{Main theorem}

We shall prove as follows the main result of this paper.

\begin{theorem}\label{mainthm} Let $\mathfrak{R}$ and $\mathfrak{R}'$ be alternative rings.
Suppose that $\mathfrak{R}$ is a ring containing a nontrivial idempotent $e_1$ which satisfies:
\begin{enumerate}
\item[\it (i)] If $[a_{11}+ a_{22}, \mathfrak{R}_{12}] = 0$, then $a_{11} + a_{22} \in \mathcal{Z}(\mathfrak{R}),$
\item[\it (ii)] If $[a_{11}+ a_{22}, \mathfrak{R}_{21}] = 0$, then $a_{11} + a_{22} \in \mathcal{Z}(\mathfrak{R}).$
\end{enumerate}
Then every Lie multiplicative bijection $\varphi$ of $\mathfrak{R}$ onto an arbitrary alternative ring $\mathfrak{R}'$ is almost additive.
\end{theorem}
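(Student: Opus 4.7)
The plan is to mimic the Changjing--Quanyuan argument but replace every associative manipulation by the Peirce multiplication rules (i)--(iv) together with the flexibility identity of Proposition~\ref{Lflexivel}. Decompose each element via $a=a_{11}+a_{12}+a_{21}+a_{22}$, $a_{ij}\in\mathfrak{R}_{ij}$, and establish almost additivity in stages: first on the off-diagonal summands, then on the diagonal summands, then on mixed sums across Peirce components, and finally on an arbitrary pair $a,b\in\mathfrak{R}$.

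For two off-diagonal pieces $x_{ij},y_{ij}\in\mathfrak{R}_{ij}$ with $i\neq j$, the observation that $x_{ij}+y_{ij}=[e_i,x_{ij}+y_{ij}]$ (after using $e_ix_{ij}=x_{ij}$, $x_{ij}e_i=0$ from the Peirce relations) allows one to write the sum as a single Lie bracket and push it through $\varphi$ via the Lie multiplicative property and Lemma~\ref{Claim1}. For two diagonal pieces $x_{ii},y_{ii}\in\mathfrak{R}_{ii}$, choose $T\in\mathfrak{R}$ with $\varphi(T)=\varphi(x_{ii})+\varphi(y_{ii})$ using bijectivity, decompose $T$ by Peirce, and apply Lemma~\ref{Claim3} with test elements in $\mathfrak{R}_{ij}$ and $\mathfrak{R}_{ji}$; the resulting commutator identities force the off-diagonal Peirce components of $T$ to vanish, while hypotheses (i) and (ii) identify $T-x_{ii}-y_{ii}$ as a central element, absorbed into $\mathcal{Z}(\mathfrak{R}')$ via Lemma~\ref{Claim2}. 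The mixed sums $\varphi(x_{11}+y_{12})$, $\varphi(x_{12}+y_{21})$, $\varphi(x_{ii}+y_{ij})$, etc., are handled by the same $T$-trick, now testing against several commutators to peel off one Peirce slot at a time.

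The main combination step takes general $a,b\in\mathfrak{R}$ and chooses $T$ with $\varphi(T)=\varphi(a)+\varphi(b)$, decomposed as $T=t_{11}+t_{12}+t_{21}+t_{22}$. Lemma~\ref{Claim3} then rewrites $\varphi(T)=\varphi(a)+\varphi(b)$ as the commutator identity $\varphi([T,s])=\varphi([a,s])+\varphi([b,s])$ and its double analogue, valid for every $s\in\mathfrak{R}$. Testing against $s\in\mathfrak{R}_{12}\cup\mathfrak{R}_{21}$ and invoking the off-diagonal and mixed-sum claims already established, one reads off $t_{ij}=a_{ij}+b_{ij}$ for $i\neq j$ together with $[t_{11}+t_{22}-a_{11}-b_{11}-a_{22}-b_{22},\mathfrak{R}_{12}]=[t_{11}+t_{22}-a_{11}-b_{11}-a_{22}-b_{22},\mathfrak{R}_{21}]=0$. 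Hypotheses (i)--(ii) then place this diagonal difference in $\mathcal{Z}(\mathfrak{R})$, so $T=a+b+z$ with $z\in\mathcal{Z}(\mathfrak{R})$, and Lemma~\ref{Claim2} finally gives $\varphi(a+b)=\varphi(a)+\varphi(b)+z'$ with $z'\in\mathcal{Z}(\mathfrak{R}')$.

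The principal obstacle I foresee is where the non-associativity bites: whenever the associative proof freely rearranged a triple such as $(x_{ii}y_{ij})s_{jk}=x_{ii}(y_{ij}s_{jk})$, an associator calculation has to be supplied. The working tools will be the Peirce compatibility $e_ia\cdot e_j=e_i\cdot ae_j$, the alternative identities $(x,x,y)=(y,x,x)=0$, and the linearization $(x,y,z)+(z,y,x)=0$ of Proposition~\ref{Lflexivel}. The bookkeeping will be heaviest in the diagonal and mixed-sum claims, since there several Peirce components interact simultaneously and one must repeatedly convert associators into zero via the Peirce incidence rules before the commutator-additivity manipulations inherited from Lemma~\ref{Claim3} can be applied.
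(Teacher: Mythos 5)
There is a genuine gap, and it sits exactly at the step your plan relies on first. For two elements $x_{ij},y_{ij}\in\mathfrak{R}_{ij}$ ($i\neq j$), the identity $x_{ij}+y_{ij}=[e_i,x_{ij}+y_{ij}]$ is true but vacuous for additivity: applying $\varphi$ gives $\varphi(x_{ij}+y_{ij})=[\varphi(e_i),\varphi(x_{ij}+y_{ij})]$, in which the unknown quantity $\varphi(x_{ij}+y_{ij})$ appears on both sides, so nothing decomposes into $\varphi(x_{ij})+\varphi(y_{ij})$. The same circularity appears if you instead pick $T$ with $\varphi(T)=\varphi(x_{ij})+\varphi(y_{ij})$ and bracket with $e_1$ via Lemma~\ref{Claim3}: you obtain $\varphi([T,e_1])=\varphi(-x_{ij})+\varphi(-y_{ij})$, which is again a sum of two $\varphi$-values, i.e.\ precisely what you are trying to collapse. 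The working trick (the paper's Lemma~\ref{lema6}, following Changjing--Quanyuan) is to realize the sum as a bracket of two \emph{different} elements, $a_{21}+b_{21}+2b_{21}a_{21}=[e_1+a_{21},e_1-b_{21}]$, so that Lie multiplicativity turns $\varphi$ of the left side into $[\varphi(e_1+a_{21}),\varphi(e_1-b_{21})]$, and \emph{then} the previously established mixed-sum lemmas split each factor: $\varphi(e_1+a_{21})=\varphi(e_1)+\varphi(a_{21})+Z'$ (Lemma~\ref{lema4}, proved by the surjectivity $T$-trick and hypothesis (i)), and Lemma~\ref{lema5} splits off the $\mathfrak{R}_{12}$-term $2b_{21}a_{21}$ on the left. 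Consequently your ordering is backwards: the mixed sums ``diagonal $+$ off-diagonal'' must be proved \emph{before} same-component off-diagonal additivity, not after, because they are consumed by the bracket decomposition.

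Two further points. First, your proposal shows no awareness of the one phenomenon that actually distinguishes the alternative case here: $\mathfrak{R}_{ij}\mathfrak{R}_{ij}\subseteq\mathfrak{R}_{ji}$ need not vanish, so the bracket $[e_1+a_{21},e_1-b_{21}]$ produces the extra term $2b_{21}a_{21}\in\mathfrak{R}_{12}$; handling that term (via Lemma~\ref{lema5}) is the substantive modification to the associative argument, whereas the associator/flexibility bookkeeping you anticipate as the main obstacle plays almost no role --- the Peirce multiplication rules suffice throughout. Second, a smaller but real defect: writing $[e_2,\cdot\,]$ presumes $e_2$ is an element of $\mathfrak{R}$, but the paper only assumes a nontrivial idempotent $e_1$ in a possibly non-unital ring, where $e_2$ is merely the operator $a\mapsto a-e_1a$; all bracket identities must therefore be arranged to use $e_1$ alone, as the paper's Lemmas~\ref{lema4}--\ref{lema8} do.
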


The following lemmas has the same hypotheses of Theorem \ref{mainthm} and we need these lemmas for the proof of this theorem. Thus, let us consider $e_{1}$ a nontrivial idempotent of $\mathfrak{R}$. 

\begin{lemma}\label{lema4} For any $a_{11} \in \R_{11}$, $b_{ij} \in \R_{ij}$, with $i \neq j$ there exist $Z'_{a_{11}, b_{ij}} \in \mathcal{Z}(\R')$ such
that
$$\varphi(a_{11} + b_{ij}) = \varphi(a_{11}) + \varphi(b_{ij}) + Z'_{a_{11}, b_{ij}}.$$
\end{lemma}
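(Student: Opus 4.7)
The plan is to use surjectivity of $\varphi$ to pick $c \in \mathfrak{R}$ with $\varphi(c) = \varphi(a_{11}) + \varphi(b_{ij})$, and to reduce the lemma to proving the single structural fact
\[
c - (a_{11} + b_{ij}) \in \mathcal{Z}(\mathfrak{R}).
\]
Once this is established, Lemma \ref{Claim2} applied with $a = a_{11}+b_{ij}$ and $z = c - (a_{11}+b_{ij})$ yields $\varphi(c) = \varphi(a_{11}+b_{ij}) + z'$ for some $z' \in \mathcal{Z}(\mathfrak{R}')$, and setting $Z'_{a_{11},b_{ij}} = -z'$ produces the asserted identity.

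First I would fix the off-diagonal Peirce components of $c$. Writing $c = c_{11}+c_{12}+c_{21}+c_{22}$, the Peirce multiplicative relations give $[c,e_1] = c_{21}-c_{12}$. On the other hand, Lemma \ref{Claim3} with $t=e_1$, combined with Lemma \ref{Claim1} and the fact that $[a_{11},e_1]=0$, shows $\varphi([c,e_1]) = \varphi([b_{ij},e_1])$. Direct computation gives $[b_{12},e_1]=-b_{12}$ and $[b_{21},e_1]=b_{21}$, so injectivity of $\varphi$ forces $c_{ij}=b_{ij}$ and $c_{ji}=0$. Consequently $y := c - (a_{11}+b_{ij}) = (c_{11}-a_{11})+c_{22} \in \mathfrak{R}_{11}\oplus\mathfrak{R}_{22}$.

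Second, and this is the core of the argument, I would show $[y,t_{ij}] = 0$ for every $t_{ij} \in \mathfrak{R}_{ij}$. Expanding in Peirce gives $[y,t_{ij}] = (c_{11}-a_{11})t_{ij} - t_{ij}c_{22} \in \mathfrak{R}_{ij}$. To extract this vanishing from the hypothesis I plan to apply Lemma \ref{Claim3} to iterated brackets of the form $[[c,t_{ij}],e_1]$: since bracketing with $e_1$ acts as $-\mathrm{id}$ on $\mathfrak{R}_{12}$ and as $+\mathrm{id}$ on $\mathfrak{R}_{21}$, it separates the two Peirce blocks of $[c,t_{ij}]$. Comparing with the analogous iterated bracket built from $a_{11}$ and $b_{ij}$, whose $\mathfrak{R}_{ji}$-contribution coincides with that of the left-hand side (thanks to $c_{ij}=b_{ij}$ and $c_{ji}=0$ from the first step), injectivity of $\varphi$ then forces $(c_{11}-a_{11})t_{ij} = t_{ij}c_{22}$, which is exactly $[y,t_{ij}]=0$.

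Finally, hypothesis (i) (in the case $j=2$) or hypothesis (ii) (in the case $j=1$) of Theorem \ref{mainthm} gives $y \in \mathcal{Z}(\mathfrak{R})$, and the reduction of the first paragraph completes the proof. The main obstacle is the second step: Lemma \ref{Claim3} only produces equalities \emph{after} applying $\varphi$, so one cannot simply cancel common summands on the two sides to recover an element-level identity. The device of bracketing with $e_1$ and using the alternative Peirce relations to arrange that the non-target Peirce contributions on the two sides of Lemma \ref{Claim3} coincide, so that injectivity of $\varphi$ can be invoked to extract the $\mathfrak{R}_{ij}$-component identity, is precisely where the alternative structure enters and what makes the argument go through.
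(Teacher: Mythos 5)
Your overall scaffolding (choose $c$ with $\varphi(c)=\varphi(a_{11})+\varphi(b_{ij})$, pin down $c_{ij}=b_{ij}$ and $c_{ji}=0$ via the bracket with $e_1$, and finish with Lemma \ref{Claim2} and the hypotheses of Theorem \ref{mainthm}) is exactly the paper's. But your second step contains a genuine gap, located precisely at the point you yourself flag as the ``main obstacle''. You bracket with $t_{ij}\in\mathfrak{R}_{ij}$, the \emph{same} Peirce block that contains $b_{ij}$. In an alternative ring $\mathfrak{R}_{ij}\mathfrak{R}_{ij}\subseteq\mathfrak{R}_{ji}$ is in general nonzero, so $[[b_{ij},t_{ij}],e_1]=\pm(b_{ij}t_{ij}-t_{ij}b_{ij})$ is a generally nonzero element of $\mathfrak{R}_{ji}$, and Lemma \ref{Claim3} leaves you with an identity of the form $\varphi(w_{ij}+d_{ji})=\varphi(u_{ij})+\varphi(d_{ji})$, where $u_{ij}=[[a_{11},t_{ij}],e_1]$, $d_{ji}=[[b_{ij},t_{ij}],e_1]$, and $w_{ij}$ is the component you want to identify. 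The fact that $d_{ji}$ appears on both sides does not let you cancel it: injectivity applies only to an equation $\varphi(x)=\varphi(y)$, and rewriting the right-hand side as $\varphi(u_{ij}+d_{ji})$ is exactly additivity of $\varphi$ on $\mathfrak{R}_{ij}\oplus\mathfrak{R}_{ji}$ --- that is Lemma \ref{lema5}, which comes \emph{after} this lemma and cannot be invoked here. So the inference ``injectivity then forces $(c_{11}-a_{11})t_{ij}=t_{ij}c_{22}$'' is unjustified; your argument is in effect the associative proof of Changjing--Quanyuan, where $\mathfrak{R}_{ij}\mathfrak{R}_{ij}=0$ makes $d_{ji}=0$ and the step legitimate, but it breaks in the alternative setting, which is the whole point of this paper.

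The paper's way around this is the opposite choice of auxiliary element: for $b_{21}$ one brackets with $x_{12}\in\mathfrak{R}_{12}$, and for $b_{12}$ with $x_{21}\in\mathfrak{R}_{21}$. Then $[b_{ij},x_{ji}]$ lies in $\mathfrak{R}_{ii}\oplus\mathfrak{R}_{jj}$, hence $[[b_{ij},x_{ji}],e_1]=0$ identically, the right-hand side of Lemma \ref{Claim3} collapses to a single $\varphi$-value (plus $\varphi(0)=0$ by Lemma \ref{Claim1}), and injectivity legitimately yields $[c_{11}-a_{11}+c_{22},x_{ji}]=0$ for all $x_{ji}$. Note this also swaps the hypotheses relative to your proposal: condition (i) of Theorem \ref{mainthm} handles the case $b_{21}$ and condition (ii) handles $b_{12}$, not the other way around. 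This is exactly the phenomenon recorded in the paper's final Remark: hypothesis (ii) is absent from the associative theory because there $\mathfrak{R}_{12}\mathfrak{R}_{12}=\mathfrak{R}_{21}\mathfrak{R}_{21}=0$.
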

\begin{proof}
We shall only prove the case $i=2$, $j=1$ because the demonstration of the other case is similar. 
By surjectivity of $\varphi$ there exist $c = c_{11} + c_{12} + c_{21} + c_{22} \in \R$ such that $\varphi(c) = \varphi(a_{11}) + \varphi(b_{21})$. Applying the Lemma \ref{Claim1} and \ref{Claim3} we have
$$\varphi([c,e_1]) = \varphi([a_{11}, e_1]) + \varphi([b_{21}, e_1]) = \varphi([b_{21}, e_1]).$$
Since $\varphi$ is injective, we get $[c, e_1] = [b_{21}, e_1]$. Thus $c_{21} = b_{21}$ and $c_{12} = 0$.
Now for any $x_{12} \in \R_{12}$, we have
$$\varphi([[c,x_{12}],e_1]) = \varphi([[a_{11},x_{12}],e_1]) + \varphi([[b_{21},x_{12}],e_1]).$$
By the injectivity of $\varphi$ and Lemma \ref{Claim1}, we get $[c_{11} - a_{11} + c_{22}, x_{12}] = 0$. Therefore by condition $(i)$ of the Theorem \ref{mainthm} we have
$c_{11} - a_{11} + c_{22} \in \mathcal{Z}(\R)$. And finally by
Lemma \ref{Claim2} we verified that the Lemma is valid.
\end{proof}

\begin{lemma}\label{lema5}
For any $a_{12} \in \R_{12}$ and $b_{21} \in \R_{21}$, we have $\varphi(a_{12} + b_{21}) = \varphi(a_{12}) + \varphi(b_{21})$.
\end{lemma}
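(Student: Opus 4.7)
Following the pattern of Lemma \ref{lema4}, the plan is to use surjectivity of $\varphi$ to produce $c = c_{11}+c_{12}+c_{21}+c_{22} \in \R$ with $\varphi(c) = \varphi(a_{12}) + \varphi(b_{21})$, and then isolate the Peirce components of $c$ via iterated applications of Lemma \ref{Claim3}.

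The key first step, and what makes this lemma strictly additive (unlike Lemma \ref{lema4}), is to show that the diagonal part of $c$ is exactly zero. Apply Lemma \ref{Claim3} with $t = s = e_1$; using $[e_1,a_{12}] = a_{12}$ and $[e_1, b_{21}] = -b_{21}$, a direct Peirce calculation gives $[[c,e_1],e_1] = c_{12}+c_{21}$, $[[a_{12},e_1],e_1] = a_{12}$ and $[[b_{21},e_1],e_1] = b_{21}$. Hence $\varphi(c_{12}+c_{21}) = \varphi(a_{12})+\varphi(b_{21}) = \varphi(c)$, and injectivity of $\varphi$ forces $c_{11} = c_{22} = 0$, so $c \in \R_{12} + \R_{21}$ with no central correction required.

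To finish, I would pin down the off-diagonal parts. For arbitrary $x_{12} \in \R_{12}$, apply Lemma \ref{Claim3} with $t = x_{12}$ and $s = e_1$; using the multiplicative relations (i)--(iv) of the Peirce decomposition and $c = c_{12}+c_{21}$, one obtains $[[c,x_{12}],e_1] = c_{12}x_{12}-x_{12}c_{12}$, $[[a_{12},x_{12}],e_1] = a_{12}x_{12}-x_{12}a_{12}$, and $[[b_{21},x_{12}],e_1] = 0$. Injectivity then gives $[c_{12}-a_{12}, x_{12}] = 0$ for every $x_{12} \in \R_{12}$. A symmetric computation with $t = x_{21}$ yields $[c_{21}-b_{21}, x_{21}] = 0$ for every $x_{21} \in \R_{21}$. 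Parallel tests with $t \in \R_{11}, \R_{22}$ together with iterated nesting are used to extend these commutation relations to all of $\R$, forcing $c_{12}-a_{12}$ and $c_{21}-b_{21}$ into $\mathcal{Z}(\R)$. Since any $z \in \R_{ij} \cap \mathcal{Z}(\R)$ with $i \neq j$ satisfies $[z,e_1] = \pm z = 0$, both differences vanish and $c = a_{12}+b_{21}$, proving the claim.

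The main obstacle is the last propagation step. In the alternative setting $\R_{12}\R_{12} \subseteq \R_{21}$ and $\R_{21}\R_{21} \subseteq \R_{12}$ are generically nonzero, so the commutation $[c_{12}-a_{12}, x_{12}] = 0$ combined with the linearised identity $x_{12}y_{12}+y_{12}x_{12} = 0$ only directly yields $2(c_{12}-a_{12})x_{12} = 0$; converting this (and its $\R_{21}$-analogue) into genuine centrality, without introducing a spurious central adjustment that would violate the exact additivity claimed, requires delicate Peirce bookkeeping across all four test blocks and a careful invocation of the hypotheses (i) and (ii) of Theorem \ref{mainthm}, which is precisely where the alternative proof diverges in substance from the associative one of Changjing and Quanyuan.
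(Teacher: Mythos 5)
Your setup and your first two steps are correct: producing $c$ with $\varphi(c)=\varphi(a_{12})+\varphi(b_{21})$, using $[[c,e_1],e_1]=c_{12}+c_{21}$ together with injectivity to conclude $c_{11}=c_{22}=0$ exactly (which is indeed why no central correction can appear in this lemma), and the Peirce computations giving $[c_{12}-a_{12},x_{12}]=0$ for all $x_{12}\in\R_{12}$ and $[c_{21}-b_{21},x_{21}]=0$ for all $x_{21}\in\R_{21}$ are all accurate in the alternative setting. The gap is the final step, which you assert rather than prove, and which cannot be carried out as described. Conditions (i) and (ii) of Theorem \ref{mainthm} take as hypothesis a statement about an element of the form $a_{11}+a_{22}$ and conclude centrality of that \emph{diagonal} element; they say nothing about when an element of $\R_{12}$ or $\R_{21}$ is central, so they cannot be ``carefully invoked'' to force $c_{12}-a_{12}\in\mathcal{Z}(\R)$. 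Nor do your derived relations suffice by themselves: $[c_{12}-a_{12},x_{12}]=0$ combined with the linearization of $x_{12}^{2}=0$ gives only $2(c_{12}-a_{12})x_{12}=0$, and Theorem \ref{mainthm} assumes neither $2$-torsion freeness nor primeness, so one cannot even reach $(c_{12}-a_{12})\R_{12}=0$, much less $c_{12}=a_{12}$.

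Two further signs the plan is unworkable. First, your proposed ``parallel tests with $t\in\R_{11},\R_{22}$'' are circular: for $t=x_{11}$, Lemma \ref{Claim3} yields $\varphi(c_{21}x_{11}+x_{11}c_{12})=\varphi(x_{11}a_{12})+\varphi(b_{21}x_{11})$, whose right-hand side is exactly a sum $\varphi(\hbox{element of }\R_{12})+\varphi(\hbox{element of }\R_{21})$ --- the very kind of sum Lemma \ref{lema5} is supposed to recombine. Second, in the associative specialization ($\R_{12}\R_{12}=0=\R_{21}\R_{21}$) all three quantities in your step-2 tests vanish identically, so those tests carry no information there, yet the lemma is still nontrivial in that case; hence they cannot be the engine of any proof that specializes correctly. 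The route available in the associative case (Claim 7 of \cite{chang}, which is all the paper itself offers as proof) relies on single commutators against the opposite block, where $[a_{12},x_{12}]=0$ makes one term drop and the remaining identity can be compared block by block; precisely this degenerates for alternative rings, since $[a_{12},x_{12}]$ lands in $\R_{21}$ and is generally nonzero, producing again a sum of $\varphi$-values over two different blocks that none of the lemmas available before Lemma \ref{lema5} can recombine. So what is missing is not ``delicate Peirce bookkeeping'' but a genuinely new idea for pinning down $c_{12}$ and $c_{21}$; your proposal does not supply it (and, it should be said, the paper's one-line citation glosses over exactly this difficulty).
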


\begin{proof}
By the same arguments of Claim 7 of \cite{chang}. 
\end{proof}

\begin{lemma}\label{lema6}
For any $a_{ij}, b_{ij} \in \R_{ij}$, we have $\varphi(a_{ij} + b_{ij}) = \varphi(a_{ij}) + \varphi(b_{ij})$.
\end{lemma}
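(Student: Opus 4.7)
The approach is to prove Lemma \ref{lema6} by splitting on $(i,j)$. By the symmetry of hypotheses (i) and (ii) of Theorem \ref{mainthm} under $e_1 \leftrightarrow e_2$, it suffices to handle the off-diagonal case $(i,j)=(1,2)$ and the diagonal case $(i,j)=(1,1)$. I would prove the off-diagonal case first, so that it is available as a tool inside the diagonal argument.

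For the off-diagonal case, use surjectivity of $\varphi$ to pick $c=c_{11}+c_{12}+c_{21}+c_{22}$ with $\varphi(c)=\varphi(a_{12})+\varphi(b_{12})$. Applying Lemma \ref{Claim3} with $t=s=e_1$, the Peirce calculation gives $[[a_{12},e_1],e_1]=a_{12}$, $[[b_{12},e_1],e_1]=b_{12}$ and $[[c,e_1],e_1]=c_{12}+c_{21}$, so injectivity of $\varphi$ forces $c_{11}=c_{22}=0$, i.e.\ $c=c_{12}+c_{21}$. Next, Lemma \ref{Claim3} with $t=x_{12}\in\R_{12}$ and $s=e_1$, unfolded via the Peirce rules and combined with Lemma \ref{lema5} and the identity $\varphi([a,x])=[\varphi(a),\varphi(x)]$, yields $c_{21}x_{12}-x_{12}c_{21}=0$ for all $x_{12}\in\R_{12}$. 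Since $c_{21}x_{12}\in\R_{22}$ and $x_{12}c_{21}\in\R_{11}$ sit in disjoint Peirce components, each vanishes individually, so $c_{21}\R_{12}=\R_{12}c_{21}=0$. A symmetric use of Lemma \ref{Claim3} with $t=y_{21}\in\R_{21}$ produces analogous constraints on $c_{12}-a_{12}-b_{12}$, and combining these with hypotheses (i) and (ii) of Theorem \ref{mainthm} (applied to the diagonal data they generate) forces $c_{21}=0$ and $c_{12}=a_{12}+b_{12}$, yielding $\varphi(a_{12}+b_{12})=\varphi(a_{12})+\varphi(b_{12})$.

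For the diagonal case, pick $c$ with $\varphi(c)=\varphi(a_{11})+\varphi(b_{11})$. Lemma \ref{Claim3} with $t=e_1$ yields $[c,e_1]=0$ (since $[a_{11},e_1]=[b_{11},e_1]=0$), forcing $c_{12}=c_{21}=0$, so $c=c_{11}+c_{22}$. For each $x_{12}\in\R_{12}$, applying Lemma \ref{Claim3} with $t=x_{12}$, $s=e_1$ and using the just-established off-diagonal additivity on $\R_{12}$ to combine $\varphi(a_{11}x_{12})+\varphi(b_{11}x_{12})=\varphi((a_{11}+b_{11})x_{12})$ on the right-hand side, a Peirce computation (together with injectivity) gives $(c_{11}-a_{11}-b_{11})x_{12}=x_{12}c_{22}$, i.e.\ $[(c_{11}-a_{11}-b_{11})+c_{22},\R_{12}]=0$. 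Hypothesis (i) of Theorem \ref{mainthm} places this element in $\mathcal{Z}(\R)$, and Lemma \ref{Claim2} absorbs the central piece into a central element of $\R'$. A parallel argument with $\R_{21}$ and hypothesis (ii), combined with injectivity of $\varphi$, forces the residual central correction to be zero, giving the desired exact equality.

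The main obstacle will be the algebraic bookkeeping in the alternative (non-associative) setting: every expansion of $[[c,t],s]$ into Peirce components must rely only on the multiplication rules (i)--(iv) and Proposition \ref{Lflexivel}, since associators need not vanish and one cannot reassociate freely. A secondary, more conceptual subtlety is ensuring that the final central correction actually vanishes rather than only obtaining additivity modulo $\mathcal{Z}(\R')$; this is precisely why both hypotheses (i) and (ii) of Theorem \ref{mainthm} (rather than just one) are needed — they act on $\R_{12}$ and $\R_{21}$ respectively and together pin the error term down to zero.
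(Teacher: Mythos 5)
Your proposal has genuine gaps in both cases it treats. In the off-diagonal case, the key computational claim is false: writing $c=c_{12}+c_{21}$, the Peirce rules give $[c,x_{12}]=2c_{12}x_{12}+(c_{21}x_{12}-x_{12}c_{21})$, where $c_{21}x_{12}-x_{12}c_{21}\in\R_{22}\oplus\R_{11}$; since diagonal Peirce components commute with $e_1$, the outer bracket kills exactly this part, so $[[c,x_{12}],e_1]=2c_{12}x_{12}$ and the double commutator carries no information about $c_{21}$ at all — you cannot extract $c_{21}x_{12}-x_{12}c_{21}=0$ from it. What Lemma \ref{Claim3} actually yields is $\varphi(2c_{12}x_{12})=\varphi(2a_{12}x_{12})+\varphi(2b_{12}x_{12})$, and since $a_{12}x_{12},b_{12}x_{12}\in\R_{21}$, exploiting this requires additivity on $\R_{21}$ — the very statement being proved, so the argument is circular (Lemma \ref{lema5} concerns sums $a_{12}+b_{21}$ from \emph{different} corners and does not help here). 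Moreover, even if you had $c_{21}\R_{12}=\R_{12}c_{21}=0$, hypotheses (i) and (ii) of Theorem \ref{mainthm} speak only about \emph{diagonal} elements $a_{11}+a_{22}$; to conclude $c_{21}=0$ from such annihilation conditions you would need a primeness-type hypothesis (as in Theorem \ref{meu}), which Theorem \ref{mainthm} does not assume.

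In the diagonal case you are attempting to prove something that is false in general: exact additivity on $\R_{11}$ does not follow from the hypotheses, which is precisely why the paper states the diagonal case separately with a central correction (Lemma \ref{lema7}); the present lemma is intended only for $i\neq j$ (compare Lemma \ref{lema6d}, which says so explicitly). Concretely, on $\R=\R'=M_2(\mathbb{R})$ with $e_1=E_{11}$, the map $\varphi(x)=x+\operatorname{tr}(x)^3 I$ is a Lie multiplicative bijection (central summands vanish on commutators and cancel inside brackets) and the ring satisfies (i) and (ii), yet $\varphi(e_1+e_1)=2e_1+8I\neq 2e_1+2I=\varphi(e_1)+\varphi(e_1)$. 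Your final step also fails on its own terms: the ``parallel argument with $\R_{21}$'' produces exactly the same conclusion as the $\R_{12}$ argument, namely $(c_{11}-a_{11}-b_{11})+c_{22}\in\mathcal{Z}(\R)$, so it adds no constraint and nothing forces the central correction to vanish. The paper's actual proof avoids preimages entirely: linearizing $x_{ij}^{2}=0$ gives $a_{21}b_{21}+b_{21}a_{21}=0$, whence $a_{21}+b_{21}+2b_{21}a_{21}=[e_1+a_{21},e_1-b_{21}]$; applying $\varphi$, expanding the right-hand side by Lie multiplicativity together with Lemma \ref{lema4} (the central corrections $Z'$ disappear inside commutators), and splitting the left-hand side by Lemma \ref{lema5} (since $a_{21}+b_{21}\in\R_{21}$ while $2b_{21}a_{21}\in\R_{12}$), one cancels $\varphi(2b_{21}a_{21})$ and obtains the exact equality.
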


\begin{proof}
Here too we shall only prove the case $i=2$, $j=1$ because the demonstration of the other case is similar.
Firstly observe that by $x_{ij}^{2}=0,$ for all $x_{ij}\in \mathfrak{R}_{ij}\ (i,j=1,2;~i\neq j)$ we have
$$a_{21} + b_{21} + 2b_{21}a_{21} = [e_1 + a_{21}, e_1 - b_{21}].$$
Now making use of Lemma \ref{lema4} and \ref{lema5} we get
\begin{eqnarray*}
\varphi(a_{21} + b_{21}) &+& \varphi(2b_{21}a_{21}) = \varphi(a_{21} + b_{21} +2b_{21}a_{21}) \\&=& \varphi([e_1 + a_{21}, e_1 - b_{21}]) \\&=& [\varphi(e_1 + a_{21}), \varphi(e_1 - b_{21})] \\&=& [\varphi(e_1) + \varphi(a_{21}) + Z'_{e_1, a_{21}}, \varphi(e_1) + \varphi(-b_{21}) + Z'_{e_1, b_{21}}] \\&=& [\varphi(e_1),\varphi(e_1)]+[\varphi(a_{21}),\varphi(e_1)]+[\varphi(e_1), \varphi(-b_{21}) ]\\&+&[\varphi(a_{21}), \varphi(-b_{21})] \\&=& \varphi([e_1, e_1]) + \varphi([a_{21}, e_1]) + \varphi([e_1, -b_{21}]) + \varphi([a_{21}, -b_{21}]) \\&=& \varphi(a_{21}) + \varphi(b_{21}) + \varphi(2b_{21}a_{21}). 
\end{eqnarray*}
For the case $i=1$, $j=2$ make use of 
$$a_{12} + b_{12} + 2a_{12}b_{12} = [e_1 - b_{12}, e_1 + a_{12}].$$
\end{proof}

\begin{lemma}\label{lema7}
For any $a_{ii}, b_{ii} \in \R_{ii}$, $i=1,2$, there exists a $Z'_{a_{ii},b_{ii}} \in \mathcal{Z}(\R')$ such that
$$\varphi(a_{ii} + b_{ii}) = \varphi(a_{ii}) + \varphi(b_{ii}) + Z'_{a_{ii},b_{ii}}.$$
\end{lemma}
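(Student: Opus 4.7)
The plan is to follow the same template used in Lemma \ref{lema4}: use surjectivity to name a preimage, apply Lemma \ref{Claim3} together with a suitably chosen bracket with $e_1$ to pin down the off-diagonal Peirce pieces of that preimage, then use a second iterated bracket together with condition $(i)$ (or $(ii)$) of Theorem \ref{mainthm} to force the diagonal discrepancy into the centre, and finally invoke Lemma \ref{Claim2} to absorb the central correction.

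I treat only $i=1$, the case $i=2$ being symmetric with condition $(ii)$ in place of $(i)$. By surjectivity of $\varphi$, pick $c = c_{11}+c_{12}+c_{21}+c_{22}\in \R$ with $\varphi(c) = \varphi(a_{11})+\varphi(b_{11})$. Applying Lemma \ref{Claim3} with $t=e_1$ and noting that $[a_{11},e_1]=[b_{11},e_1]=0$, we get $\varphi([c,e_1]) = 0 = \varphi(0)$, so by injectivity $[c,e_1]=0$. Using the Peirce relations, $[c,e_1] = c_{21}-c_{12}$, hence $c_{12}=c_{21}=0$ and $c = c_{11}+c_{22}$.

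Next, for arbitrary $x_{12}\in\R_{12}$ apply Lemma \ref{Claim3} again to the triple bracket $[[\,\cdot\,,x_{12}],e_1]$:
\begin{equation*}
\varphi\bigl([[c,x_{12}],e_1]\bigr) = \varphi\bigl([[a_{11},x_{12}],e_1]\bigr) + \varphi\bigl([[b_{11},x_{12}],e_1]\bigr).
\end{equation*}
A short computation with the Peirce rules gives $[c,x_{12}] = c_{11}x_{12}-x_{12}c_{22}\in\R_{12}$, hence $[[c,x_{12}],e_1] = -c_{11}x_{12}+x_{12}c_{22}$; similarly for $a_{11}$ and $b_{11}$ (with the $c_{22}$-term absent). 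Injectivity of $\varphi$ then yields $(c_{11}-a_{11}-b_{11})x_{12} - x_{12}c_{22}=0$ for every $x_{12}\in\R_{12}$, which is precisely $[c_{11}-a_{11}-b_{11}+c_{22},\,\R_{12}]=0$. Hypothesis $(i)$ of Theorem \ref{mainthm} then places $z := c_{11}-a_{11}-b_{11}+c_{22}\in\mathcal{Z}(\R)$, so $c = a_{11}+b_{11}+z$. Finally Lemma \ref{Claim2} produces $z'\in\mathcal{Z}(\R')$ with $\varphi(c) = \varphi(a_{11}+b_{11})+z'$, and combining with $\varphi(c) = \varphi(a_{11})+\varphi(b_{11})$ gives $\varphi(a_{11}+b_{11}) = \varphi(a_{11})+\varphi(b_{11}) + Z'_{a_{11},b_{11}}$ with $Z'_{a_{11},b_{11}}:=-z'\in\mathcal{Z}(\R')$.

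The only step that requires care is the second one: the triple-bracket computation must be done respecting the non-associative multiplication, so that the terms of the form $c_{11}x_{12}$, $x_{12}c_{22}$, $a_{11}x_{12}$, $b_{11}x_{12}$ indeed land in $\R_{12}$ and the cross products vanish by relation $(iii)$. This is where the ``$\,\cdot\,$'' convention and the Peirce multiplication rules (i)--(iv) of the excerpt are used in an essential way; everything else is a mechanical application of Lemmas \ref{Claim1}--\ref{Claim3}.
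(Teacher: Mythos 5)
Your overall template is the right one, and it is in fact the argument the paper intends (the paper simply defers to Claim 6 of \cite{chang}; your write-up is the alternative-ring adaptation of that claim, with the Peirce computations correctly done non-associatively, e.g.\ using $\mathfrak{R}_{22}\mathfrak{R}_{12}=0$ and $\mathfrak{R}_{12}\mathfrak{R}_{11}=0$ after first forcing $c_{12}=c_{21}=0$, which is important here since $\mathfrak{R}_{12}\mathfrak{R}_{12}\subseteq\mathfrak{R}_{21}$ need not vanish).

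There is, however, one step that does not work as written. From
$$\varphi\bigl([[c,x_{12}],e_1]\bigr) = \varphi(-a_{11}x_{12}) + \varphi(-b_{11}x_{12})$$
you say ``injectivity of $\varphi$ then yields'' the element-wise identity. Injectivity applies to an equation of the form $\varphi(u)=\varphi(v)$; here the right-hand side is a \emph{sum of two} $\varphi$-images, and injectivity alone gives nothing. (This is exactly the point where your situation differs from Lemma \ref{lema4}, which you cite as the template: there the second bracket $[[b_{21},x_{12}],e_1]$ is a commutator of diagonal elements with $e_1$, hence $0$, so only Lemma \ref{Claim1} was needed.) The repair is immediate but must be stated: since $-a_{11}x_{12}$ and $-b_{11}x_{12}$ both lie in $\mathfrak{R}_{12}$, Lemma \ref{lema6} gives $\varphi(-a_{11}x_{12}) + \varphi(-b_{11}x_{12}) = \varphi(-(a_{11}+b_{11})x_{12})$, and only then can injectivity be invoked to get $(c_{11}-a_{11}-b_{11})x_{12}-x_{12}c_{22}=0$. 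This also corrects your closing remark that everything reduces to Lemmas \ref{Claim1}--\ref{Claim3}: the present lemma genuinely depends on Lemma \ref{lema6}, which is why it must come after it in the paper's ordering. With that one invocation inserted, your proof is complete and correct, including the symmetric case $i=2$ via $x_{21}\in\mathfrak{R}_{21}$ and condition $(ii)$.
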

\begin{proof}
By the same arguments of Claim 6 of \cite{chang}.
\end{proof}

\begin{lemma}\label{lema8}
For any $a_{11} \in \R_{11}$, $b_{12} \in \R_{12}$, $c_{21} \in \R_{21}$, $d_{22} \in \R_{22}$, there exists a $Z'_{a_{11},b_{12}, c_{21}, d_{22}} \in \mathcal{Z}(\R')$ such that
$$\varphi(a_{11} + b_{12} + c_{21} + d_{22}) = \varphi(a_{11}) + \varphi(b_{12}) + \varphi(c_{21}) + \varphi(d_{22}) + Z'_{a_{11},b_{12}, c_{21}, d_{22}}.$$
\end{lemma}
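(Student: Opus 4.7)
The plan is to extend the method of Lemma \ref{lema4} from two Peirce components to four. Using surjectivity of $\varphi$, pick a preimage $c = c_{11} + c_{12} + c_{21}' + c_{22}$ with $\varphi(c) = \varphi(a_{11}) + \varphi(b_{12}) + \varphi(c_{21}) + \varphi(d_{22})$. To apply Lemma \ref{Claim3} in its four-term form, first choose $u, v \in \R$ with $\varphi(u) = \varphi(a_{11}) + \varphi(b_{12})$ and $\varphi(v) = \varphi(c_{21}) + \varphi(d_{22})$; then $\varphi(c) = \varphi(u) + \varphi(v)$, and two nested applications of Lemma \ref{Claim3} produce, for all $t,s \in \R$, the four-term expansions of $\varphi([c,t])$ and $\varphi([[c,t],s])$ over the summands $a_{11}, b_{12}, c_{21}, d_{22}$.

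I would first take $t = e_1$. Since $[a_{11}, e_1] = [d_{22}, e_1] = 0$ by the Peirce relations, the right side collapses to $\varphi(-b_{12}) + \varphi(c_{21})$, which equals $\varphi(-b_{12} + c_{21})$ by Lemma \ref{lema5}. Injectivity of $\varphi$ then gives $[c, e_1] = -b_{12} + c_{21}$, forcing $c_{12} = b_{12}$ and $c_{21}' = c_{21}$.

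Next, take $t = x_{12} \in \R_{12}$ and $s = e_1$. A careful Peirce calculation in the alternative ring yields $[a_{11}, x_{12}] = a_{11}x_{12} \in \R_{12}$, $[d_{22}, x_{12}] = -x_{12}d_{22} \in \R_{12}$, $[b_{12}, x_{12}] = b_{12}x_{12} - x_{12}b_{12} \in \R_{21}$, and $[c_{21}, x_{12}] \in \R_{11} \oplus \R_{22}$. Bracketing each once more with $e_1$ kills the last contribution, negates the two $\R_{12}$ terms, and leaves the $\R_{21}$ term unchanged. Since the three surviving summands on the image side lie only in the off-diagonal corners, Lemmas \ref{lema5} and \ref{lema6} let us consolidate them into a single $\varphi(-a_{11}x_{12} + b_{12}x_{12} - x_{12}b_{12} + x_{12}d_{22})$. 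The analogous direct computation on the left gives $[[c, x_{12}], e_1] = -c_{11}x_{12} + b_{12}x_{12} - x_{12}b_{12} + x_{12}c_{22}$. Injectivity of $\varphi$ and cancellation of the common $\R_{21}$ middle then yield $(c_{11} - a_{11})x_{12} - x_{12}(c_{22} - d_{22}) = 0$ for all $x_{12} \in \R_{12}$, equivalently $[(c_{11}-a_{11}) + (c_{22}-d_{22}),\, \R_{12}] = 0$.

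Condition (i) of Theorem \ref{mainthm} then places $(c_{11}-a_{11}) + (c_{22}-d_{22})$ in $\mathcal{Z}(\R)$, so $c = a_{11} + b_{12} + c_{21} + d_{22} + z$ for some $z \in \mathcal{Z}(\R)$, and Lemma \ref{Claim2} converts this to the desired identity with $Z'_{a_{11},b_{12},c_{21},d_{22}} = -z' \in \mathcal{Z}(\R')$. The main obstacle is the nonassociative Peirce bookkeeping in the third step: in an alternative ring the identities $[y_{12}, e_1] = -y_{12}$, $[y_{21}, e_1] = y_{21}$, and $[y_{ii}, e_1] = 0$ must be derived from the flexible and alternative identities rather than borrowed from associativity, and the four image summands must be arranged so they all fall inside $\R_{12} \cup \R_{21}$, the only corners where the previously proved additivity Lemmas \ref{lema5} and \ref{lema6} are available.
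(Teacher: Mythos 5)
Your proposal is correct and follows essentially the same route as the paper's proof: pick a preimage via surjectivity, bracket with $e_1$ to identify the off-diagonal components, then use the double bracket $[[\,\cdot\,,x_{12}],e_1]$ together with Lemmas \ref{lema5}, \ref{lema6} and condition (i) to show the diagonal discrepancy is central, finishing with Lemma \ref{Claim2}. Your only additions are cosmetic improvements: the nested application of Lemma \ref{Claim3} to justify the four-term expansion (which the paper uses tacitly), and keeping $b_{12}x_{12}-x_{12}b_{12}$ unsimplified where the paper writes $2b_{12}x_{12}$ via the anticommutativity $x_{12}b_{12}=-b_{12}x_{12}$ in $\mathfrak{R}_{21}$.
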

\begin{proof}
By surjectivity of $\varphi$ there exist $h = h_{11} + h_{12} + h_{21} + h_{22} \in \R$ such that $\varphi(h) = \varphi(a_{11}) + \varphi(b_{12}) + \varphi(c_{21}) + \varphi(d_{22})$. Applying the Lemma \ref{Claim1}, \ref{Claim3} and \ref{lema5} we have
\begin{eqnarray*}
\varphi([e_1, h]) &=& \varphi([e_1, a_{11}])+\varphi([e_1, b_{12}])+\varphi([e_1, c_{21}])+\varphi([e_1, d_{22}])
\\&=& \varphi(b_{12}) + \varphi(-c_{21})
\\&=& \varphi(b_{12} - c_{21}).
\end{eqnarray*}
Since $\varphi$ is injective, we get $h_{12} = b_{12}$ and $h_{21} = c_{21}$.
Now for any $x_{12} \in \R_{12}$, by Lemma \ref{lema5} and \ref{lema6} we obtain
\begin{eqnarray*}
\varphi([[h,x_{12}],e_1]) &=& \varphi([[a_{11},x_{12}],e_1]) + \varphi([[b_{12},x_{12}],e_1])
\\&+& \varphi([[c_{21},x_{12}],e_1]) + \varphi([[d_{22},x_{12}],e_1])
\\&=& \varphi(-a_{11}x_{12}) + \varphi(2b_{12}x_{12}) + \varphi(x_{12}d_{22})
\\&=& \varphi(-a_{11}x_{12} + 2b_{12}x_{12} + x_{12}d_{22}).
\end{eqnarray*}
As $\varphi$ is injective, we get $[h_{11} + h_{22} - a_{11} -d_{22}, x_{12}] = 0$ for all $x_{12} \in \R_{12}$. By condition $(i)$ of the Theorem \ref{mainthm}
we have $h = a_{11} + b_{12} + c_{21} + d_{22} + Z$ for some $Z \in \mathcal{Z}(\R)$. Thus the Lemma is true by Lemma \ref{Claim2}.
\end{proof}

\vspace{0,5cm}

We are ready to prove our Theorem \ref{mainthm}.

\vspace{0,5cm}

\noindent Proof of Theorem. Let $a, b \in \R$ with $a= a_{11} + a_{12} + a_{21} + a_{22}$ and $b= b_{11} + b_{12} + b_{21} + b_{22}$. By previous Lemmas we obtain
\begin{eqnarray*}
\varphi(a + b) &=& \varphi(a_{11} + a_{12} + a_{21} + a_{22} + b_{11} + b_{12} + b_{21} + b_{22})
\\&=& \varphi((a_{11} + b_{11}) + (a_{12} + b_{12}) + (a_{21}+ b_{21}) + (a_{22} + b_{22}))
\\&=&\varphi(a_{11} + b_{11}) + \varphi(a_{12} + b_{12}) + \varphi(a_{21}+ b_{21}) + \varphi(a_{22} + b_{22}) + Z'_{1}
\\&=& \varphi(a_{11}) +\varphi( b_{11}) + Z'_{2} + \varphi(a_{12}) + \varphi(b_{12}) + \varphi(a_{21})
\\&+& \varphi(b_{21}) +\varphi(a_{22}) + \varphi(b_{22}) + Z'_{3} + Z'_{1}
\\&=& (\varphi(a_{11}) +\varphi(a_{12}) +\varphi(a_{21})+\varphi(a_{22})) 
\\&+& (\varphi( b_{11}) + \varphi( b_{12}) + \varphi( b_{21}) + \varphi( b_{22})) + (Z'_1 + Z'_2 + Z'_3)
\\&=& \varphi(a_{11} + a_{12} + a_{21} + a_{22}) - Z'_{4} + \varphi(b_{11} + b_{12} + b_{21} + b_{22}) - Z'_{5} 
\\&+&(Z'_1 + Z'_2 + Z'_3)
\\&=& \varphi(a) + \varphi(b) + (Z'_1 + Z'_2 + Z'_3 - Z'_4 -Z'_5)
\\&=& \varphi(a) + \varphi(b) + Z'_{a,b}.
\end{eqnarray*}
It is therefore our theorem is proved.

\section{Lie triple derivable Maps}

\subsection{Main theorem}

We shall prove as follows the main result of this paper.

\begin{theorem}\label{mainthmd} Let $\mathfrak{R}$ be an alternative rings.
Suppose that $\mathfrak{R}$ is a ring containing a nontrivial idempotent $e_1$ which satisfies the same hypotheses of the Theorem \ref{mainthm}.
Then every Lie triple derivable map $\D$ of $\mathfrak{R}$ into itself is almost additive.
\end{theorem}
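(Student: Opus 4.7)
The plan is to adapt the proof of Theorem \ref{mainthm} by replacing each use of the Lie multiplicative identity by the Lie triple derivable identity, following the same Peirce-decomposition strategy as in \cite{changd} but with two essential changes. First, since $\D$ is not assumed bijective, we cannot invoke injectivity as in Lemma \ref{lema4}; and second, centrality must be extracted by testing through a double commutator $[[\cdot, x], y]$ rather than a single commutator $[\cdot, x]$.

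First I would establish the three basic auxiliary statements: (a) $\D(0) = 0$, which follows immediately from $\D([[0,0],0]) = 0$ and the defining identity; (b) a central correction statement analogous to Lemma \ref{Claim2}, namely that $\D(a + z) - \D(a) \in \mathcal{Z}(\mathfrak{R})$ for every $a \in \mathfrak{R}$ and $z \in \mathcal{Z}(\mathfrak{R})$; and (c) an analogue of Lemma \ref{Claim3}, stating that if $c - a - b \in \mathcal{Z}(\mathfrak{R})$ then $\D([[c,t],s]) = \D([[a,t],s]) + \D([[b,t],s])$ for all $t, s \in \mathfrak{R}$, which is the tool that lets additivity information propagate through further triple brackets.

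Next I would prove Peirce-component additivity lemmas mirroring Lemmas \ref{lema4}--\ref{lema8}: additivity modulo centre of $\D$ on sums $a_{11} + b_{ij}$ with $i \neq j$, on $a_{12} + b_{21}$, on $a_{ij} + b_{ij}$ within a single off-diagonal component, on $a_{ii} + b_{ii}$, and finally on the general four-term Peirce sum $a_{11} + b_{12} + c_{21} + d_{22}$. The replacement for the injectivity argument is the following. Setting the defect $\Delta(a,b) := \D(a+b) - \D(a) - \D(b)$, for any $x, y \in \mathfrak{R}$ the Lie triple derivable identity applied on both sides of $[[a+b,x],y] = [[a,x],y] + [[b,x],y]$, together with previously established additivity of $\D$ on this particular decomposition, yields
\[
[[\Delta(a,b), x], y] = 0.
\]
Choosing $x \in \mathfrak{R}_{12}$ (respectively $\mathfrak{R}_{21}$) and $y = e_1$, hypothesis (i) (respectively (ii)) of Theorem \ref{mainthm} then forces $\Delta(a,b) \in \mathcal{Z}(\mathfrak{R})$.

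The main obstacle, I expect, will be the bootstrap ordering: at each stage the argument above requires knowing that $\D$ is already additive on the Peirce-expanded sum produced by $[[a+b, x], y]$, so the lemmas must be sequenced exactly as in Lemmas \ref{lema4}--\ref{lema8}, with careful use of the central correction from step (b) to absorb the accumulating centre-valued discrepancies. A secondary obstacle is rewriting the double-commutator identities used in Lemma \ref{lema6}, such as $a_{21} + b_{21} + 2b_{21}a_{21} = [e_1 + a_{21}, e_1 - b_{21}]$, as triple commutators so that the derivable identity can be applied; the resulting auxiliary off-diagonal Peirce pieces are controlled using Proposition \ref{Lflexivel} together with the multiplicative relations (i)--(iv). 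Once all these Peirce additivities hold modulo centre, the final telescoping calculation is formally identical to the one closing the proof of Theorem \ref{mainthm}.
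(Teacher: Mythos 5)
Your overall route is the one the paper itself takes: replace the injectivity argument of Lemma \ref{lema4} by the defect $\Delta(a,b)=\D(a+b)-\D(a)-\D(b)$, derive $[[\Delta(a,b),x],y]=0$ by expanding the triple-derivation identity on both sides of $[[a+b,x],y]=[[a,x],y]+[[b,x],y]$, kill the off-diagonal Peirce parts of $\Delta$ with $x=y=e_1$, get centrality of the diagonal part from conditions (i)/(ii), bootstrap through the same sequence of Peirce lemmas (Lemmas \ref{Claim1d}, \ref{lema4d}--\ref{lema8d}, including the triple-commutator identity $a_{21}+b_{21}+2a_{21}b_{21}=[[e_1+a_{21},e_1-b_{21}],e_1]$ you anticipate needing), and finish by telescoping. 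Your statement (b) is provable by this same mechanism but is actually superfluous here: it is only needed in the Lie multiplicative case, where one must pass from a surjectivity preimage $c=a+b+Z$ back to $a+b$; the defect argument for $\D$ never creates that step.

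However, your auxiliary statement (c) is wrong as formulated, and since you list it as a basic statement to be established first, executing the plan literally would stall there. If $c-a-b=z\in\mathcal{Z}(\R)$, then expanding both sides with the derivation identity and using centrality of $z$ to cancel all terms containing $\D(t)$ and $\D(s)$ gives
\begin{align*}
\D([[c,t],s])-\D([[a,t],s])-\D([[b,t],s]) \;=\; [[\D(c)-\D(a)-\D(b),\,t],s],
\end{align*}
and nothing forces the right-hand side to vanish: centrality of $c-a-b$ carries no information about $\D(c)-\D(a)-\D(b)$. Indeed, taking $z=0$, statement (c) asserts $[[\Delta(a,b),t],s]=0$ for all $t,s$, which together with conditions (i)/(ii) is equivalent to the very theorem being proved; so (c) is circular. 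The correct analogue of Lemma \ref{Claim3} must hypothesize on the images, e.g.\ $c=a+b$ \emph{and} $\D(c)=\D(a)+\D(b)+z'$ with $z'\in\mathcal{Z}(\R)$, under which the displayed right-hand side is $[[z',t],s]=0$. Fortunately, your plan does not really rest on (c): in the defect paragraph you correctly invoke ``previously established additivity of $\D$ on this particular decomposition,'' i.e.\ the prior Peirce lemmas splitting $\D$ on the concrete elements $[[a,x],y]$ and $[[b,x],y]$ (exactly how the paper's Lemma \ref{lema8d} uses Lemmas \ref{lema5d} and \ref{lema6d}). Delete (c), or restate it with the hypothesis on the $\D$-images, and your outline coincides with the paper's proof.
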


The following lemmas has the same hypotheses of Theorem \ref{mainthmd} and we need these lemmas for the proof of this theorem. Thus, let us consider $e_{1}$ a nontrivial idempotent of $\mathfrak{R}$. It's worth highlighting that some lemmas have their proof equal to the claims in \cite{changd} and when this occurs we will make the proper mention.
We started with the following

\begin{lemma}\label{Claim1d} $\D(0) = 0$.
\end{lemma}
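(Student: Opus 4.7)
The plan is to specialize the defining identity
\[
\D([[x,y],z]) = [[\D(x),y],z] + [[x,\D(y)],z] + [[x,y],\D(z)]
\]
by taking $x = y = z = 0$. Since $0 \cdot a = a \cdot 0 = 0$ holds in any ring, the commutator $[0,a]$ vanishes for every $a \in \R$, and in particular $[[0,0],0] = 0$. Therefore the left-hand side of the specialized identity collapses to $\D(0)$.

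For the right-hand side, each of the three terms has the form $[[u,v],w]$ with $v = w = 0$. The inner commutator $[\D(0),0]$ equals $\D(0)\cdot 0 - 0\cdot \D(0) = 0$, and the outer commutator $[0,0]$ is then also $0$; the analogous computation handles the second and third summands. Thus the right-hand side vanishes identically, and the identity reduces to $\D(0) = 0$.

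The main (and only) observation needed is that the bracket is trivial whenever one argument is $0$; there is essentially no obstacle, and the argument uses neither the alternative axioms, nor the nontrivial idempotent $e_1$, nor the hypotheses $(i)$ and $(ii)$ of Theorem \ref{mainthm}. In this sense Lemma \ref{Claim1d} is a purely formal consequence of the Lie triple derivable condition, and its proof should run in two or three lines.
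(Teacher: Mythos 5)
Your proof is correct and is essentially the same argument the paper invokes: the paper's proof simply cites Claim $1$ of \cite{changd}, which is precisely this substitution $x=y=z=0$ into the Lie triple derivable identity, with every bracket vanishing because one argument is $0$. The only nitpick is your phrase ``with $v=w=0$'', which literally describes only the first summand, but your follow-up remark that the other two summands are handled analogously makes the argument complete.
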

\begin{proof}
This Lemma have identical proof as Claim $1$ in \cite{changd}. 
\end{proof} 

\begin{lemma}\label{lema4d} For any $a_{11} \in \R_{11}$, $b_{ij} \in \R_{ij}$, with $i \neq j$ there exist $z_{a_{11}, b_{ij}} \in \mathcal{Z}(\R)$ such
that
$$\D(a_{11} + b_{ij}) = \D(a_{11}) + \D(b_{ij}) + z_{a_{11}, b_{ij}}.$$
\end{lemma}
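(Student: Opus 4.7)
I set $T := \D(a_{11}+b_{ij}) - \D(a_{11}) - \D(b_{ij})$ and aim to show $T\in\mathcal{Z}(\R)$; the lemma then holds with $z_{a_{11},b_{ij}}:=T$. Since $\D$ is only a self-map (no surjectivity assumption), the argument of Lemma~\ref{lema4} is unavailable, so I will detect centrality of $T$ directly by feeding the Lie triple derivable identity into two carefully chosen triple brackets and then invoking hypotheses (i) and (ii) of Theorem~\ref{mainthm}.

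\textbf{Two test brackets.} I treat the case $i=2,\ j=1$; the case $i=1,\ j=2$ follows by the mirror argument with $\R_{21}$ and hypothesis (ii). Using the Peirce relations of $\R$ one checks that $[a_{11},e_1]=0$, which gives $[[a_{11}+b_{21},e_1],z]=[[b_{21},e_1],z]$ for every $z\in\R$; and that $b_{21}x_{12}\in\R_{22}$ and $x_{12}b_{21}\in\R_{11}$ both commute with $e_1$, so that $[[b_{21},x_{12}],e_1]=0$ and therefore $[[a_{11}+b_{21},x_{12}],e_1]=[[a_{11},x_{12}],e_1]$ for every $x_{12}\in\R_{12}$. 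I apply $\D$ to each of these two equalities, expand both sides by the Lie triple derivable identity, and subtract the corresponding companion identity $\D([[a_{11},e_1],z])=0$ (for the first test) and $\D([[b_{21},x_{12}],e_1])=0$ (for the second test), each also expanded by the identity, to cancel every tail term involving $\D(e_1)$, $\D(x_{12})$ or $\D(z)$. What should survive is the clean pair
\[
[[T,e_1],z]=0 \text{ for all } z\in\R, \qquad [[T,x_{12}],e_1]=0 \text{ for all } x_{12}\in\R_{12}.
\]

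\textbf{Reading off centrality.} Decomposing $T=T_{11}+T_{12}+T_{21}+T_{22}$, the first equation says that $T_{21}-T_{12}=[T,e_1]$ is centralized by all of $\R$; setting $z=e_1$ forces $T_{12}+T_{21}=0$, and since $T_{12}\in\R_{12}$ and $T_{21}\in\R_{21}$ sit in disjoint Peirce components I conclude $T_{12}=T_{21}=0$. With $T=T_{11}+T_{22}$, the inner bracket $[T,x_{12}]=T_{11}x_{12}-x_{12}T_{22}$ lies in $\R_{12}$, on which $[\,\cdot\,,e_1]$ is multiplication by $-1$; hence $[T_{11}+T_{22},x_{12}]=0$ for every $x_{12}\in\R_{12}$. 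Hypothesis (i) of Theorem~\ref{mainthm} then places $T\in\mathcal{Z}(\R)$, as required.

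\textbf{Main obstacle.} The delicate step will be arranging the Lie triple derivation identity so that its nine expanded terms collapse to a single clean relation $[[T,\alpha],\beta]=0$. This is exactly why one of the outer arguments must be $e_1$: only then does the companion identity for $a_{11}$ alone (respectively $b_{21}$ alone) have its left-hand side equal to $\D(0)=0$, and only then do the tails involving $\D(e_1)$, $\D(x_{12})$, $\D(z)$ pair up across the two expansions and cancel. Once the pairings $(\alpha,\beta)=(e_1,z)$ and $(\alpha,\beta)=(x_{12},e_1)$ are singled out, the alternative-ring bookkeeping essentially parallels the associative proof of Claim~$4$ in \cite{changd}, but it depends crucially on the Peirce multiplication relations (i)--(iv) to kill the cross terms coming from $b_{21}x_{12}$ and $x_{12}b_{21}$.
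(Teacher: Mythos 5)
Your proposal is correct and follows essentially the same route as the paper's proof: the paper also sets $t=\D(a_{11}+b_{ij})-\D(a_{11})-\D(b_{ij})$, uses the test bracket $[[\,\cdot\,,e_1],e_1]$ to kill the off-diagonal parts of $t$, and then a bracket $[[\,\cdot\,,c],e_1]$ with $c$ ranging over the opposite off-diagonal Peirce space to invoke hypothesis (i)/(ii). The only cosmetic difference is that you work out the case $i=2,j=1$ (using hypothesis (i)) while the paper writes out $i=1,j=2$ (using hypothesis (ii)), each deferring the mirror case to symmetry.
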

\begin{proof}
We shall only prove the case $i=1$, $j=2$ because the demonstration of the other case is similar just use the condition (i) of the Theorem \ref{mainthmd}.
According to Changjing et al. we considered $t = \D(a_{11} + b_{12}) - \D(a_{11}) - \D(b_{12})$.
As in the associative case we get $[[t, e_1],e_1]=0$ just to observe that 
$$\D([[a_{11} + b_{12},e_1],e_1]) = \D(b_{12}) = \D([[a_{11},e_1],e_1]) + \D([[b_{12},e_1],e_1]).$$ 
It follows that $t_{12} + t_{21} = 0$ just use the definition of $\D$. Now we will use the condition (ii) of the Theorem \ref{mainthmd}, for this let any $c_{21} \in \R_{21}$ and note that 
$$\D([[a_{11} + b_{12},c_{21}],e_1]) = \D(-c_{21}a_{11}) = \D([[a_{11},c_{21}],e_1]) + \D([[b_{12}, c_{21}],e_1]).$$
So using the definition of $\D$ and Lemma \ref{Claim1d} we obtain $[t_{11} + t_{22}, c_{21}] = 0$. Therefore by condition (ii) of the Theorem \ref{mainthmd} we have $t_{11} + t_{22} \in \mathcal{Z}(\R)$. Thus, $\D(a_{11} + b_{12}) = \D(a_{11}) + \D(b_{12}) + z_{a_{11},b_{12}}$.

\end{proof}

\begin{lemma}\label{lema5d}
For any $a_{12} \in \R_{12}$ and $b_{21} \in \R_{21}$, we have $\D(a_{12} + b_{21}) = \D(a_{12}) + \D(b_{21})$.
\end{lemma}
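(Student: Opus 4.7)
The plan is to apply the Lie triple derivation property to the three identities
\begin{align*}
[[e_1+a_{12},\,e_1-b_{21}],\,e_1] &= a_{12}+b_{21},\\
[[e_1+a_{12},\,e_1],\,e_1] &= a_{12},\\
[[e_1,\,e_1-b_{21}],\,e_1] &= b_{21},
\end{align*}
each of which follows from a direct Peirce computation; for the first, the cross term $[a_{12},b_{21}] = a_{12}b_{21}-b_{21}a_{12}$ lies in $\R_{11}\oplus\R_{22}$ and so is killed by the outer bracket with $e_1$.

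First I would apply $\D$ to each identity and expand via the Lie triple derivation law. This brings in $\D(e_1+a_{12})$ and $\D(e_1-b_{21})$, which by Lemma \ref{lema4d} (applied with $b_{ij}\in\R_{12}$ and with $b_{ij}\in\R_{21}$ respectively) split as $\D(e_1)+\D(a_{12})+z_1$ and $\D(e_1)+\D(-b_{21})+z_2$ with $z_1,z_2\in\mathcal{Z}(\R)$; the central summands drop out of every subsequent bracket since $[z_i,\cdot]=0$. Expanding the resulting triple brackets by bilinearity yields twelve summands for $\D(a_{12}+b_{21})$ and five each for $\D(a_{12})$ and $\D(b_{21})$.

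Forming $\D(a_{12}+b_{21})-\D(a_{12})-\D(b_{21})$, the bulk of the summands cancel in pairs, and the two residual $\D(e_1)$-only terms $[[\D e_1,e_1],e_1]$ and $[[e_1,\D e_1],e_1]$ annihilate each other by skew-symmetry of the bracket. What remains is exactly
$$[[\D a_{12},\,-b_{21}],\,e_1]+[[a_{12},\,\D(-b_{21})],\,e_1]+[[a_{12},\,-b_{21}],\,\D e_1],$$
which the derivation identity recognises as $\D([[a_{12},-b_{21}],e_1])$. Since $[a_{12},-b_{21}]\in\R_{11}\oplus\R_{22}$, the outer bracket vanishes and this equals $\D(0)=0$ by Lemma \ref{Claim1d}, giving the desired $\D(a_{12}+b_{21})=\D(a_{12})+\D(b_{21})$ with no residual central term.

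The hard part will be the bookkeeping across the twelve-term expansion: the signs introduced by the $-b_{21}$ factor, the anti-symmetric cancellation of the $\D(e_1)$-only summands, and the clearing of brackets against central $z_i$ must all line up so that the residue reassembles as the Lie triple derivative of a Peirce-trivial element. Once this cancellation is pinned down, no further hypothesis of Theorem \ref{mainthmd} is needed for this lemma.
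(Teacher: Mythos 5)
Your proof is correct and is essentially the argument the paper invokes by citing Claim 5 of \cite{changd}: the key identity $[[e_1+a_{12},\,e_1-b_{21}],e_1]=a_{12}+b_{21}$, expansion through the Lie triple derivation law and Lemma \ref{lema4d} (with the central summands $z_1,z_2$ dropping out of all brackets), and recognition of the residue as $\D([[a_{12},-b_{21}],e_1])=\D(0)=0$ since $[a_{12},b_{21}]\in\R_{11}\oplus\R_{22}$. This expand-and-regroup technique is exactly what the paper carries out explicitly in its proof of Lemma \ref{lema6d}, so your write-up matches the paper's approach.
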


\begin{proof}
By the same arguments of Claim 5 of \cite{changd}. 
\end{proof}

\begin{lemma}\label{lema6d}
For any $a_{ij}, b_{ij} \in \R_{ij}$ with $i \neq j$, we have $\D(a_{ij} + b_{ij}) = \D(a_{ij}) + \D(b_{ij})$.
\end{lemma}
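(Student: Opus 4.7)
The plan is to transport the argument of the Lie multiplicative analogue (Lemma~\ref{lema6}) to the Lie triple setting, using a triple bracket in place of a single one. I handle $i=2,j=1$ in detail; the case $i=1,j=2$ runs in parallel with the mirror identity $[[e_1+a_{12},e_1-b_{12}],e_1]=a_{12}+b_{12}-2a_{12}b_{12}$. Linearising $y_{21}^{2}=0$ gives $a_{21}b_{21}+b_{21}a_{21}=0$, and the Peirce rules $[A_{21},e_1]=A_{21}$, $[A_{12},e_1]=-A_{12}$ then yield the key identity
\[
[[e_1+a_{21},\,e_1-b_{21}],\,e_1]\;=\;a_{21}+b_{21}-2b_{21}a_{21},
\]
whose right-hand side lies in $\R_{21}\oplus\R_{12}$, so Lemma~\ref{lema5d} applies and gives $\D(a_{21}+b_{21}-2b_{21}a_{21})=\D(a_{21}+b_{21})+\D(-2b_{21}a_{21})$.

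I then expand the same element via the Lie triple derivation rule applied to the triple bracket on the left, using Lemma~\ref{lema4d} to replace $\D(e_1+a_{21})$ by $\D(e_1)+\D(a_{21})+z_{1}$ and $\D(e_1-b_{21})$ by $\D(e_1)+\D(-b_{21})+z_{2}$ with $z_{1},z_{2}\in\mathcal{Z}(\R)$; the central terms kill their enclosing brackets. In parallel I expand $\D(a_{21})$, $\D(b_{21})$ and $\D(-2b_{21}a_{21})$ by applying the derivation rule to the triple-bracket presentations $a_{21}=[[a_{21},e_1],e_1]$, $b_{21}=[[b_{21},e_1],e_1]$ and $-2b_{21}a_{21}=[[e_1,b_{21}],a_{21}]$. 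Subtracting and using bilinearity of the inner bracket together with the Peirce multiplication rules, every term built from $\D(e_1),\D(a_{21}),\D(b_{21})$ should appear with matching coefficient on both sides and cancel.

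The main obstacle is the unpaired occurrence of $\D(-b_{21})$ introduced through Lemma~\ref{lema4d}, which the hypothesis does not relate directly to $-\D(b_{21})$. To remove it I intend to exploit the trivial identity $[[b_{21},-b_{21}],z]=0$ for every $z\in\R$ (valid because $b_{21}^{2}=0$): the derivation rule converts this into $[[b_{21},\D(b_{21})+\D(-b_{21})],z]=0$ for all $z$, so the element $[b_{21},\D(b_{21})+\D(-b_{21})]$ commutes with every element of $\R$. Specialising the outer argument to $e_1$ and to elements of $\R_{12}$ and $\R_{21}$ pins down the Peirce components of $\D(b_{21})+\D(-b_{21})$ that are relevant to the master expansion, and shows that their net contribution enters the surrounding brackets centrally and is absorbed, leaving $\D(a_{21}+b_{21})=\D(a_{21})+\D(b_{21})$ with no central correction. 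This $\D(-b_{21})$-cancellation had no counterpart in the Lie multiplicative case, where $\varphi(-b_{21})$ was automatically absorbed inside a Lie bracket by the homomorphism property, and it is the step that I expect to require the most careful bookkeeping of Peirce components and of the non-associative interactions in the expanded triple commutators.
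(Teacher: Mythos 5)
Your setup --- the key identity $[[e_1+a_{21},e_1-b_{21}],e_1]=a_{21}+b_{21}+2a_{21}b_{21}$ (your $-2b_{21}a_{21}$ equals $+2a_{21}b_{21}$), the splitting via Lemma~\ref{lema5d}, and the substitution of Lemma~\ref{lema4d} into the derivation expansion --- coincides with the paper's. The gap is in the cancellation step. After the substitutions, the left-hand expansion consists of twelve terms, \emph{every one} of which has $e_1$ or $\D(e_1)$ in the outer slot of the triple bracket. But your chosen presentations $b_{21}=[[b_{21},e_1],e_1]$ and $-2b_{21}a_{21}=[[e_1,b_{21}],a_{21}]$ put $\D(b_{21})$ (rather than $\D(-b_{21})$) in the middle slot and, worse, put $a_{21}$ in the outer slot: expanding them produces terms such as $[[e_1,\D(b_{21})],a_{21}]$ and $[[\D(e_1),b_{21}],a_{21}]$ that have no counterpart on the left. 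You cannot reshuffle outer arguments to force a match, because the commutator of an alternative ring does not satisfy the Jacobi identity: in fact $[[x,y],z]+[[y,z],x]+[[z,x],y]=6(x,y,z)$, which is generally nonzero. Your proposed patch is also too weak to close this: from $[[b_{21},-b_{21}],z]=0$ you only learn that $[b_{21},\D(b_{21})+\D(-b_{21})]$ commutes with every element of $\R$; this gives no control over $[[e_1,\D(b_{21})+\D(-b_{21})],e_1]$ or over the $a_{21}$-outer terms, and in an alternative ring commuting with all elements does not even imply membership in $\mathcal{Z}(\R)$, whose definition additionally requires lying in the nucleus.

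The fix is not to expand the right-hand side at all. After replacing $\D(e_1+a_{21})$ and $\D(e_1-b_{21})$ by Lemma~\ref{lema4d} and discarding the central elements, regroup the twelve left-hand terms into four \emph{complete} derivation expansions, namely
\begin{align*}
\D([[e_1,e_1],e_1])&+\D([[e_1,-b_{21}],e_1])+\D([[a_{21},e_1],e_1])+\D([[a_{21},-b_{21}],e_1])\\
&=0+\D(b_{21})+\D(a_{21})+\D(2a_{21}b_{21}),
\end{align*}
using $[[e_1,-b_{21}],e_1]=b_{21}$, $[[a_{21},e_1],e_1]=a_{21}$ and $[[a_{21},-b_{21}],e_1]=2a_{21}b_{21}$. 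This is exactly the paper's proof: the presentations are chosen with $-b_{21}$ in the middle slot and $e_1$ in the outer slot precisely so that the occurrences of $\D(-b_{21})$ coming from Lemma~\ref{lema4d} are absorbed wholesale into $\D(b_{21})$ and $\D(2a_{21}b_{21})$. The relation between $\D(-b_{21})$ and $-\D(b_{21})$ --- the obstacle you correctly identified but did not remove --- then never needs to be addressed, and no auxiliary identity is required.
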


\begin{proof}
Here we shall only prove the case $i=2$, $j=1$ because the demonstration of the other case is similar.
Firstly observe that by $x_{ij}^{2}=0,$ for all $x_{ij}\in \mathfrak{R}_{ij}\ (i,j=1,2;~i\neq j)$ we have
$$a_{21} + b_{21} + 2a_{21}b_{21} = [[e_1 + a_{21}, e_1 - b_{21}],e_1].$$
Now making use of Lemma \ref{lema4d} and \ref{lema5d} we get
\begin{eqnarray*}
\D(a_{21} + b_{21}) &+& \D(2a_{21}b_{21}) = \D(a_{21} + b_{21} +2a_{21}b_{21}) \\&=& \D([[e_1 + a_{21}, e_1 - b_{21}],e_1]) \\&=& [[\D(e_1 + a_{21}), e_1 - b_{21}],e_1] + [[e_1+a_{21},\D(e_1 - b_{21})], e_1] \\&+& [[e_1 + a_{21}, e_1 - b_{21}], \D(e_1)] \\&=& [[\D(e_1) + \D(a_{21}) + z_{e_1, a_{21}}, e_1 -b_{21}],e_1] \\&+& [[e_1 + a_{21}, \D(e_1) + \D(-b_{21})+ z_{e_1,b_{21}}],e_1] \\&+& [[e_1 + a_{21}, e_1 - b_{21}], \D(e_1)] \\&=& \D([[e_1,e_1)],e_1)])+ \D([[e_1, -b_{21}],e_1]) + \D([[a_{21}, e_1],e_1]) \\&+& \D([[a_{21},-b_{21}],e_1]) \\&=& \D(a_{21}) + \D(b_{21}) + \D(2a_{21}b_{21}).
\end{eqnarray*}
For the case $i=1$, $j=2$ make use of 
$$a_{12} + b_{12} - 2a_{12}b_{12} = [e_1,[e_1 - b_{12}, e_1 + a_{12}]].$$
\end{proof}

\begin{lemma}\label{lema7d}
For any $a_{ii}, b_{ii} \in \R_{ii}$, $i=1,2$, there exists a $z_{a_{ii},b_{ii}} \in \mathcal{Z}(\R)$ such that
$$\D(a_{ii} + b_{ii}) = \D(a_{ii}) + \D(b_{ii}) + z_{a_{ii},b_{ii}}.$$
\end{lemma}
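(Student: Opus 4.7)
The plan is to set $t := \D(a_{ii}+b_{ii}) - \D(a_{ii}) - \D(b_{ii})$, decompose $t = t_{11}+t_{12}+t_{21}+t_{22}$ in its Peirce components, and prove $t \in \mathcal{Z}(\R)$; then $z_{a_{ii},b_{ii}} := t$ does the job. Since an element of $\R$ is central iff its off-diagonal Peirce parts vanish and its diagonal sum commutes with everything, I need two facts: $t_{12} = t_{21} = 0$, and $t_{11}+t_{22}$ is central. I will argue the case $i=1$; the case $i=2$ is mirror-symmetric, swapping $\R_{12}\leftrightarrow\R_{21}$ and using hypothesis (ii) in place of (i).

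The workhorse is the following subtraction trick. Applying the Lie triple derivation identity to each of $[[x,y],z]$ with $x\in\{a_{11}+b_{11},\,a_{11},\,b_{11}\}$ and fixed $y,z$, and subtracting the last two instances from the first, the "mixed" terms $[[x,\D(y)],z]$ and $[[x,y],\D(z)]$ cancel by bilinearity of the inner bracket, leaving
$$\D([[a_{11}+b_{11},y],z])-\D([[a_{11},y],z])-\D([[b_{11},y],z]) \;=\; [[t,y],z].$$
Whenever the three inner expressions all lie in a common off-diagonal Peirce slot, Lemma \ref{lema6d} collapses the left-hand side to $\D(0)=0$, so $[[t,y],z]=0$. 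I apply this twice. First, with $y=z=e_1$: since $[a_{11},e_1]=0$ everything on the left vanishes automatically, and a direct Peirce computation gives $[[t,e_1],e_1] = t_{12}+t_{21}$, so from $\R_{12}\cap\R_{21}=0$ I conclude $t_{12}=t_{21}=0$. Second, with $y=c_{12}\in\R_{12}$ and $z=e_1$: one checks $[[a_{11},c_{12}],e_1] = -a_{11}c_{12}\in\R_{12}$ (and likewise for the other summands), so Lemma \ref{lema6d} applies and yields $[[t,c_{12}],e_1]=0$. Expanding this via the Peirce rules reduces to $t_{11}c_{12} = c_{12}t_{22}$ for every $c_{12}\in\R_{12}$, i.e., $[t_{11}+t_{22},\R_{12}]=0$. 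Hypothesis (i) of Theorem \ref{mainthmd} then delivers $t_{11}+t_{22}\in\mathcal{Z}(\R)$, finishing the proof.

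The main obstacle I anticipate is Peirce bookkeeping: in an alternative ring I cannot freely reassociate, so I must verify each expansion using only the multiplicative relations (i)--(iv) and the Peirce identity $e_i a\cdot e_j = e_i\cdot ae_j$. Fortunately every product I encounter stays in a single Peirce slot, where the arithmetic is effectively associative, so no genuine non-associativity difficulty appears. For $i=2$ the only adjustment is to take $c_{21}\in\R_{21}$, so that $[[a_{22},c_{21}],e_1] = a_{22}c_{21}\in\R_{21}$ and hypothesis (ii) concludes the centrality of $t_{11}+t_{22}$; the rest of the argument is verbatim.
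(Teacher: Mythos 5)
Your proof is correct, and it is essentially the argument the paper intends: the paper simply defers to Claim 4 of the cited associative-case work \cite{changd}, whose strategy (set $t$, use the subtraction trick with $y=z=e_1$ to kill $t_{12}+t_{21}$, then with $y=x_{12}\in\R_{12}$, $z=e_1$ together with Lemma \ref{lema6d} and hypothesis (i) -- resp.\ (ii) -- to get $t_{11}+t_{22}\in\mathcal{Z}(\R)$) is exactly what you carry out, with the Peirce bookkeeping correctly adapted to the alternative setting. No gaps: your uses of Lemmas \ref{Claim1d} and \ref{lema6d} are non-circular, and all products involved stay in single Peirce components, so the non-associativity caveat the paper raises for Lemma \ref{lema8d} does not arise here.
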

\begin{proof}
By the same arguments of Claim 4 of \cite{changd}.
\end{proof}

\begin{lemma}\label{lema8d}
For any $a_{11} \in \R_{11}$, $b_{12} \in \R_{12}$, $c_{21} \in \R_{21}$, $d_{22} \in \R_{22}$, there exists a $z_{a_{11},b_{12}, c_{21}, d_{22}} \in \mathcal{Z}(\R)$ such that
$$\D(a_{11} + b_{12} + c_{21} + d_{22}) = \D(a_{11}) + \D(b_{12}) + \D(c_{21}) + \D(d_{22}) + z_{a_{11},b_{12}, c_{21}, d_{22}}.$$
\end{lemma}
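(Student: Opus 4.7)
The plan is to imitate the strategy of Lemma \ref{lema4d}, but carried out with two bracket-moves instead of one. Set $w = a_{11}+b_{12}+c_{21}+d_{22}$ and introduce the defect
$$t = \D(w) - \D(a_{11}) - \D(b_{12}) - \D(c_{21}) - \D(d_{22}).$$
The goal is to show that $t \in \mathcal{Z}(\R)$, after which the lemma follows by rearranging.

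First I would compute $[[w,e_{1}],e_{1}] = b_{12}+c_{21}$ by a direct Peirce calculation: the diagonal pieces $a_{11}$ and $d_{22}$ commute with $e_{1}$, while each off-diagonal piece returns with its natural sign. Applying $\D$ to this equality via the Lie triple derivation rule on the left, Lemma \ref{lema5d} on the right to split $\D(b_{12}+c_{21})=\D(b_{12})+\D(c_{21})$, and separately expanding the vanishing expressions $\D([[a_{11},e_{1}],e_{1}])=\D([[d_{22},e_{1}],e_{1}])=\D(0)=0$ via the same rule, I would compare the two expansions of $\D([[w,e_{1}],e_{1}])$. The common terms $[[w,\D(e_{1})],e_{1}]$ and $[[w,e_{1}],\D(e_{1})]$ cancel, leaving $[[t,e_{1}],e_{1}]=0$. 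Writing $t = t_{11}+t_{12}+t_{21}+t_{22}$, a Peirce computation gives $[[t,e_{1}],e_{1}] = t_{12}+t_{21}$, so $t_{12}=t_{21}=0$ and $t = t_{11}+t_{22}$.

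Next, for any $x_{12}\in\R_{12}$, I would run the same argument with $[[w,x_{12}],e_{1}]$ in place of $[[w,e_{1}],e_{1}]$. The key Peirce calculations are $[[a_{11},x_{12}],e_{1}] = -a_{11}x_{12}\in\R_{12}$, $[[b_{12},x_{12}],e_{1}] = 2b_{12}x_{12}\in\R_{21}$ (invoking the alternative-ring identity $x_{12}^{2}=0$, which forces $b_{12}x_{12}+x_{12}b_{12}=0$), $[[c_{21},x_{12}],e_{1}]=0$, and $[[d_{22},x_{12}],e_{1}] = x_{12}d_{22}\in\R_{12}$. Thus $[[w,x_{12}],e_{1}] = -a_{11}x_{12}+2b_{12}x_{12}+x_{12}d_{22}$, and applying $\D$ together with Lemmas \ref{lema5d} and \ref{lema6d} splits the right-hand side as $\D(-a_{11}x_{12})+\D(2b_{12}x_{12})+\D(x_{12}d_{22})$. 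Expanding both sides by the derivation rule, the terms involving $\D(x_{12})$ and $\D(e_{1})$ cancel because the four pieces sum to $w$, and what remains is $[[t,x_{12}],e_{1}]=0$. Since $t = t_{11}+t_{22}$, this reduces to $t_{11}x_{12}=x_{12}t_{22}$, i.e., $[t_{11}+t_{22},x_{12}]=0$ for every $x_{12}\in\R_{12}$, so hypothesis (i) of Theorem \ref{mainthmd} gives $t\in\mathcal{Z}(\R)$.

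The main obstacle is bookkeeping: one has to track Peirce components carefully and invoke Lemmas \ref{lema5d} and \ref{lema6d} at precisely the right moments so that additive splittings happen before any center ambiguity is introduced, since unlike Lemma \ref{lema8} there is no surjectivity to lean on here. The only ring-theoretic input beyond the Peirce multiplication table is the alternative identity $x_{12}^{2}=0$, which collapses $[b_{12},x_{12}]$ onto the manageable form $2b_{12}x_{12}$.
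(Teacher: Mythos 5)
Your proposal is correct and follows essentially the same route as the paper: introduce the defect $t = \D(w) - \D(a_{11}) - \D(b_{12}) - \D(c_{21}) - \D(d_{22})$, obtain $[[t,e_1],e_1]=0$ and then $[[t,x_{12}],e_1]=0$ by expanding $\D([[w,e_1],e_1])$ and $\D([[w,x_{12}],e_1])$ in two ways with the help of Lemmas \ref{lema5d} and \ref{lema6d}, and conclude $t_{11}+t_{22}\in\mathcal{Z}(\R)$ from hypothesis (i) of Theorem \ref{mainthmd}. Your Peirce computation $[[b_{12},x_{12}],e_1]=2b_{12}x_{12}$ is in fact the accurate one (the paper records this term as $-b_{12}x_{12}$), but since either expression lies in $\R_{21}$ the discrepancy changes nothing structurally in the argument.
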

\begin{proof}
Before the proof of this Lemma, observe that in an alternative ring if any $x_{ij}, y_{ij} \in \R_{ij}$ with $i \neq j$ then $x_{ij}y_{ij} \in \R_{ji}$ and not necessarily $x_{ij}y_{ij} = 0$. In light of this we have a slight change in the proof of Claim $6$ made in \cite{changd}, but such a change is crucial for the result of the lemma to be valid.
According to Claim 6 in \cite{changd}, let $t = \D(a_{11} + b_{12} + c_{21} + d_{22}) - \D(a_{11}) - \D(b_{12}) - \D(c_{21}) - \D(d_{22})$. Using the definition of $\D$ and Lemma \ref{lema6d} we get $[[t,e_1],e_1] =0$, which implies $t_{12} + t_{21} = 0$.
Now for all $x_{12} \in \R_{12}$, by Lemmas \ref{lema5d}, \ref{lema6d} we have 
\begin{eqnarray*}
&&[[\D(a_{11} + b_{12} + c_{21} + d_{22}), x_{12}],e_1] + [[a_{11} + b_{12} + c_{21} + d_{22},\D(x_{12})],e_1] \\&+& [[a_{11} + b_{12} + c_{21} + d_{22},x_{12}],\D(e_1)] \\&=& \D([[a_{11} + b_{12} + c_{21} + d_{22}, x_{12}],e_1]) \\&=& \D(x_{12}d_{22} - a_{11}x_{12} - b_{12}x_{12}) \\&=& \D(x_{12}d_{22} - a_{11}x_{12}) + \D(-b_{12}x_{12}) \\&=& \D(x_{12}d_{22}) + \D(- a_{11}x_{12}) + \D(-b_{12}x_{12}) \\&=& \D([[a_{11}, x_{12}],e_1]) + \D([[b_{12}, x_{12}],e_1]) + \D([[c_{21}, x_{12}],e_1]) + \D([[d_{22}, x_{12}],e_1])\\&=& [[\D(a_{11}) + \D(b_{12}) + \D(c_{21}) + \D(d_{22}), x_{12}],e_1] \\&+& [[a_{11} + b_{12} + c_{21} + d_{22}, \D(x_{12})],e_1] \\&+& [[a_{11} + b_{12} + c_{21} + d_{22}, x_{12}],e_1].
\end{eqnarray*}
So, $[[t,x_{12}],e_1] = 0$ which implies, by condition (i) of the Theorem \ref{mainthmd}, $t_{11} + t_{22} \in \mathcal{Z}(\R)$.
Thus, $\D(a_{11} + b_{12} + c_{21} + d_{22}) = \D(a_{11}) + \D(b_{12}) + \D(c_{21}) + \D(d_{22}) + z_{a_{11},b_{12}, c_{21}, d_{22}}$ where $z_{a_{11},b_{12}, c_{21}, d_{22}} \in \mathcal{Z}(\R)$.  
\end{proof}

\vspace{0,5cm}

We are ready to prove our Theorem \ref{mainthmd}.

\vspace{0,5cm}

\noindent Proof of Theorem. Let $a, b \in \R$ with $a= a_{11} + a_{12} + a_{21} + a_{22}$ and $b= b_{11} + b_{12} + b_{21} + b_{22}$. By previous Lemmas we obtain
\begin{eqnarray*}
\D(a + b) &=& \D(a_{11} + a_{12} + a_{21} + a_{22} + b_{11} + b_{12} + b_{21} + b_{22})
\\&=& \D((a_{11} + b_{11}) + (a_{12} + b_{12}) + (a_{21}+ b_{21}) + (a_{22} + b_{22}))
\\&=&\D(a_{11} + b_{11}) + \D(a_{12} + b_{12}) + \D(a_{21}+ b_{21}) + \D(a_{22} + b_{22}) + z_{1}
\\&=& \D(a_{11}) +\D( b_{11}) + z_{2} + \D(a_{12}) + \D(b_{12}) + \D(a_{21})
\\&+& \D(b_{21}) +\D(a_{22}) + \D(b_{22}) + z_{3} + z_{1}
\\&=& (\D(a_{11}) +\D(a_{12}) +\D(a_{21})+\D(a_{22})) 
\\&+& (\D( b_{11}) + \D( b_{12}) + \D( b_{21}) + \D( b_{22})) + (z_1 + z_2 + z_3)
\\&=& \D(a_{11} + a_{12} + a_{21} + a_{22}) - z_{4} + \D(b_{11} + b_{12} + b_{21} + b_{22}) - z_{5} 
\\&+&(z_1 + z_2 + z_3)
\\&=& \D(a) + \D(b) + (z_1 + z_2 + z_3 - z_4 -z_5)
\\&=& \D(a) + \D(b) + z_{a,b}.
\end{eqnarray*}
It is therefore our theorem is proved.

\begin{corollary}
Let $\mathfrak{R}$ be an alternative rings.
Suppose that $\mathfrak{R}$ is a ring containing a nontrivial idempotent $e_1$ which satisfies:
\begin{enumerate}
\item[\it (i)] If $[a_{11}+ a_{22}, \mathfrak{R}_{12}] = 0$, then $a_{11} + a_{22} \in \mathcal{Z}(\mathfrak{R})$,
\item[\it (ii)] If $[a_{11}+ a_{22}, \mathfrak{R}_{21}] = 0$, then $a_{11} + a_{22} \in \mathcal{Z}(\mathfrak{R})$.
\end{enumerate}
Then every Lie derivable map $\D$ of $\mathfrak{R}$ into itself is almost additive.
\end{corollary}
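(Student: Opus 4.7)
The plan is to observe that this corollary reduces immediately to Theorem \ref{mainthmd} via a short compatibility check: every Lie derivable map is automatically a Lie triple derivable map. Once this reduction is in hand, the conclusion (almost additivity) follows verbatim from Theorem \ref{mainthmd}, since the hypotheses (i) and (ii) on the nontrivial idempotent $e_1$ stated in the corollary are exactly those of Theorem \ref{mainthm}, which Theorem \ref{mainthmd} inherits.

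To justify the reduction, I would take $\D \colon \R \to \R$ satisfying $\D([x,y]) = [\D(x),y] + [x,\D(y)]$ for all $x,y \in \R$, and apply this identity twice. First, viewing $[[x,y],z]$ as the Lie product of $[x,y]$ with $z$,
\begin{equation*}
\D([[x,y],z]) = [\D([x,y]),z] + [[x,y],\D(z)].
\end{equation*}
Then expand $\D([x,y])$ using the Lie derivable property and distribute the outer Lie bracket:
\begin{equation*}
[\D([x,y]),z] = [[\D(x),y],z] + [[x,\D(y)],z].
\end{equation*}
Combining these two identities yields exactly
\begin{equation*}
\D([[x,y],z]) = [[\D(x),y],z] + [[x,\D(y)],z] + [[x,y],\D(z)],
\end{equation*}
which is the Lie triple derivable identity. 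Note that this step is purely formal and uses only the bilinearity of the Lie bracket in each slot, so no use of alternativity is required here.

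Having verified that $\D$ is a Lie triple derivable map, Theorem \ref{mainthmd} applies directly under the given hypotheses and provides, for each pair $a,b \in \R$, a central element $z_{a,b} \in \mathcal{Z}(\R)$ with $\D(a+b) = \D(a) + \D(b) + z_{a,b}$, which is the desired almost additivity. I do not expect any genuine obstacle: the only thing to check is the implication Lie derivable $\Rightarrow$ Lie triple derivable, and everything else is recycled from the previously proved theorem.
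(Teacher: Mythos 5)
Your proposal is correct and is exactly the paper's argument: the paper's entire proof reads ``Just note that Lie derivable maps are Lie triple derivable maps,'' which is precisely the reduction you carry out (your explicit verification, using only additivity of the bracket in each slot, is a detail the paper leaves to the reader). Nothing further is needed.
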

\begin{proof}
Just note that Lie derivable maps are Lie triple derivable maps.
\end{proof}

\begin{remark}
It is worth noting that the hypothesis,
\begin{center}
 If $[a_{11}+ a_{22}, \mathfrak{R}_{21}] = 0$, then $a_{11} + a_{22} \in \mathcal{Z}(\mathfrak{R})$,
\end{center}
does not appear in the associative case because of the relations $\R_{12}\R_{12} = 0$ and $\R_{21}\R_{21} = 0$,
which in general is not true in alternative rings. 
\end{remark}
The following example shows us an associative ring in which conditions (i) and (ii) of the Theorems \ref{mainthm} and \ref{mainthmd} are not equivalent.
\begin{example} Let $\R$ be an associative ring with a idempotent $e \neq 0, 1$. Consider the multiplication table given by: 

\begin{center}
 \begin{tabular}{ | l | l | l | l | l | l | l |}
    \hline
    ⋅​$\cdot$ & $e$​ & $a_{11}$​ & $b_{11}$​ & $b_{12}$​ & $c_{21}$​ & $d_{22}$​\\ \hline
       $ e​$ & $e$​ & $a_{11}$​ & $b_{11}$​ & $b_{12}$​ & $0$​ & $0$​ \\ \hline
   $a_{11}$​ & $a_{11}$​ & $0$​ & $0$​ & $0$​ & $0$​ & $0​$\\ \hline
$b_{11}$​ & $b_{11}$​ & $0​$ & $b_{11}$​ & $b_{12}$​ & $0$​ & $0$​\\ \hline
	 $b_{12}$​ & $0$​ & $0$​ & $0$​ & $0$​ & $0$​ & $0$​ \\ \hline
	 $c_{21}$​ & $c_{21}$​ & $0$​ & $0$​ & $0$​ & $0$​ & $0$​ \\ \hline
	 $d_{22}$​ & $0$​ & $0$​ & $0$​ & $0$​ & $0$​ & $0$​ \\
    \hline
  \end{tabular}
\end{center}
Note that this ring is associative. And by a straightforward calculation it can be verified that $\R$ satisfies the condition (i) but does not satisfy the condition (ii) of the Theorems \ref{mainthm} and \ref{mainthmd}. Therefore the conditions of the Theorems \ref{mainthm} and \ref{mainthmd} are not equivalent.
\end{example}

Now, the following example is an alternative ring that is not associative and satisfies the hypotheses of Theorem \ref{mainthm} and \ref{mainthmd}, which allows us to show that the conditions stated in the Theorems \ref{mainthm} and \ref{mainthmd} do not represent artificial conditions.

\begin{example}
Let $\R$ be an alternative ring with a idempotent $e \neq 0, 1$. Consider the multiplication table given by: 

\begin{center}
  \begin{tabular}{ | l | l | l | l | l | l |}
    \hline
    $\cdot$ & $e$ & $a_{11}$ & $b_{12}$ & $c_{21}$ & $d_{22}$\\ \hline
        $e$ & $e$ & $a_{11}$ & $b_{12}$ & $0$ & $0$ \\ \hline
   $a_{11}$ & $a_{11}$ & $a_{11}$ & $0$ & $0$ & $0$ \\ \hline
	 $b_{12}$ & $0$ & $0$ & $0$ & $a_{11}$ & $0$ \\ \hline
	 $c_{21}$ & $c_{21}$ & $0$ & $d_{22}$ & $0$ & $0$ \\ \hline
	 $d_{22}$ & $0$ & $0$ & $0$ & $0$ & $0$ \\
    \hline
  \end{tabular}
\end{center}
Note that this ring is not associative because $(b_{12}, c_{21}, a_{11}) \neq 0$. And by a direct calculation it can be verified that $\R$ satisfies the conditions of the Theorems \ref{mainthm} and \ref{mainthmd}.
Therefore every Lie multiplicative map of $\mathfrak{R}$ in $\mathfrak{R}'$ and Lie triple derivable multiplicative map of $\mathfrak{R}$ into itself is almost additive.
\end{example}

\section{Prime alternative rings}
In this section, we shall show that prime alternative rings satisfies the conditions of the Theorems \ref{mainthm} and \ref{mainthmd}.

\begin{lemma}\label{ultlemafd}
Let $\R$ be a $3$-torsion free prime alternative ring with a nontrivial idempotent $e_1$ and $\mathcal{Z}(\R)$ be its centre.
\begin{enumerate}
\item[\it (i)] If $[a_{11} + a_{22}, \R_{12}] = 0$, then $a_{11} + a_{22} \in \mathcal{Z}(\R)$,
\item[\it (ii)] If $[a_{11} + a_{22}, \R_{21}] = 0$, then $a_{11} + a_{22} \in \mathcal{Z}(\R)$.
\end{enumerate}
\end{lemma}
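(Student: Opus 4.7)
The hypothesis of (i) reduces, via the Peirce relations (iii) $\R_{22}\R_{12} = 0 = \R_{12}\R_{11}$, to the single identity
\[ a_{11}x_{12} = x_{12}a_{22} \qquad \text{for every } x_{12} \in \R_{12}; \quad (\ast) \]
writing $a := a_{11} + a_{22}$, the goal is to prove $[a, \R] = 0$ and $a \in \mathcal{N}(\R)$, which together give $a \in \mathcal{Z}(\R)$.

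The plan proceeds in three stages. In the first stage, I would establish $[a_{11}, \R_{11}] = 0$ and $[a_{22}, \R_{22}] = 0$. For the former, I would expand $[a_{11},r_{11}]s_{12}$ by applying $(\ast)$ twice, together with the associator vanishings $(a_{11},r_{11},s_{12}) = 0$ and $(r_{11},s_{12},a_{22}) = 0$; each of these is a consequence of Proposition \ref{Lflexivel}, because a cyclic rotation of the associator has both of its summands in a Peirce product such as $\R_{12}\R_{11} = 0$. The expression collapses to $[a_{11},r_{11}]\R_{12} = 0$; picking any $0 \neq z_{12} \in \R_{12}$ (which exists because $\R$ is prime with nontrivial idempotent), a further Peirce computation yields $[a_{11},r_{11}] \cdot \R z_{12} = 0$, and Theorem \ref{meu} forces $[a_{11},r_{11}] = 0$. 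The argument for $[a_{22},\R_{22}] = 0$ is symmetric, starting from $\R_{12}[a_{22},r_{22}] = 0$ and using $x_{12}\cdot\R [a_{22},r_{22}] = 0$ for a nonzero $x_{12}$.

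In the second stage, I would deduce $[a, \R_{21}] = 0$. Setting $w := a_{22}y_{21} - y_{21}a_{11} \in \R_{21}$, analogous associator bookkeeping via $(\ast)$ and the alternating vanishings $(a_{11},x_{12},y_{21}) = (x_{12},a_{22},y_{21}) = (y_{21},x_{12},a_{22}) = 0$ collapses $wx_{12}$ to $[a_{22},\,y_{21}x_{12}]$. Since $y_{21}x_{12} \in \R_{22}$ and $[a_{22},\R_{22}] = 0$ from the first stage, we obtain $wx_{12} = 0$, hence $w\R_{12} = 0$, and Theorem \ref{meu} again forces $w = 0$. In the third stage, I verify $a \in \mathcal{N}(\R)$ by checking $(a,x,y) = 0$ on all combinations of Peirce components. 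Most of these associators vanish outright because one of their cyclic rotations lies in a trivially zero Peirce product; the only subtle cases are $(a,x_{12},y_{12})$ and $(a,x_{21},y_{21})$, which reduce via $(\ast)$ and the commutation relations already proved to scalar multiples of $x_{12}y_{12}+y_{12}x_{12}$ and $x_{21}y_{21}+y_{21}x_{21}$ respectively, both of which vanish upon linearizing the Peirce identity (iv) $x_{ij}^2 = 0$.

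Part (ii) follows by the completely symmetric argument obtained by interchanging the roles of $\R_{12}$ and $\R_{21}$ throughout. The main obstacle is the associator bookkeeping of the first two stages: in the alternative setting, several associators that vanish automatically in the associative case survive here, and must be systematically eliminated by combining the alternating symmetry of Proposition \ref{Lflexivel} with the hypothesis $(\ast)$ before Theorem \ref{meu} can be applied to extract the desired conclusion from the prime-ring annihilator condition.
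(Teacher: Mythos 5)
Your stages 1 and 2 are essentially the paper's own proof: the paper derives exactly the chains $[a_{11},x_{11}]x_{12}=0$, $x_{12}[a_{22},x_{22}]=0$ and $(a_{22}x_{21}-x_{21}a_{11})x_{12}=0$ from $(\ast)$ plus the associator vanishings of Proposition \ref{Lflexivel}, and then kills each left/right annihilator with Theorem \ref{meu}; your remark that the third chain needs $[a_{22},\R_{22}]=0$ already in hand (to commute $a_{22}$ past $y_{21}x_{12}\in\R_{22}$) is correct and is only implicit in the paper. One slip: in the second application of Theorem \ref{meu} you write $x_{12}\cdot\R[a_{22},r_{22}]=0$, but that parenthesization does not follow from Peirce bookkeeping --- with $c=[a_{22},r_{22}]\in\R_{22}$ one only gets $\R c\subseteq\R_{22}c\subseteq\R_{22}$, and $x_{12}\R_{22}\subseteq\R_{12}$ need not vanish. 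What does follow is $(x_{12}\R)\cdot c=0$, since $x_{12}\R\subseteq\R_{21}+\R_{11}+\R_{12}$ and $\R_{21}c=\R_{11}c=0$ while the $\R_{12}$-component annihilates $c$ by hypothesis; so you must invoke the $a\R\cdot b=0$ form of Theorem \ref{meu} there. (Your first application, $[a_{11},r_{11}]\cdot\R z_{12}=0$, is parenthesized correctly.)

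The genuine gap is in stage 3. You are right that nucleus membership must be checked, because the paper defines $\mathcal{Z}(\R)$ inside $\mathcal{N}(\R)$ --- indeed the paper's own proof stops at $[a_{11}+a_{22},\R]=0$ and silently omits this step --- but your case analysis is wrong. The associator $(a,x_{11},y_{11})$ reduces to $(a_{11},x_{11},y_{11})$, with all three arguments in $\R_{11}$; no rotation of it lands in a zero Peirce product (all relevant products lie in $\R_{11}\R_{11}\subseteq\R_{11}$), and it is not one of your two declared ``subtle cases,'' so your dichotomy simply misses it (likewise $(a,x_{22},y_{22})$). It is not zero for free, since a priori $\R_{11}$ is only an alternative subring. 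Two repairs exist. One is to run the primeness trick once more: the rotation argument gives $(\R_{11},\R_{11},\R_{12})=0$, whence $(a_{11},x_{11},y_{11})z_{12}=0$ for all $z_{12}$, and Theorem \ref{meu} kills the associator. The cleaner repair, which also explains why the $3$-torsion-free hypothesis is needed beyond Theorem \ref{meu}, is the identity valid in any alternative ring, $[xy,z]=x[y,z]+[x,z]y+3(x,y,z)$ (the three associator terms collapse by the alternating property); putting $z=a$ and using $[a,\R]=0$ from stages 1--2 gives $3(x,y,a)=0$, and $3$-torsion freeness yields $(x,y,a)=0$ for all $x,y$ at once, with no case analysis at all.
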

\begin{proof}
We will only prove (i) because (ii) it is similar. First note that the identities are valid in alternative rings by Proposition \ref{Lflexivel}
\begin{enumerate}
\item[\it(i)] $(x_{11}, x_{12}, a_{22}) = 0 = (a_{11}, x_{11}, x_{12})$;
\item[\it(ii)] $(x_{12},x_{22},a_{22}) = 0 = (a_{11}, x_{12}, x_{22})$;
\item[\it(iii)] $(a_{22}, x_{21}, x_{12}) = 0 = (x_{21},a_{11},x_{12})$.
\end{enumerate}
Taking these identities into account we have
%\begin{eqnarray*}
\begin{enumerate}
\item[\it(a)] $(a_{11}x_{11})x_{12} = a_{11}(x_{11}x_{12}) = (x_{11}x_{12})a_{22} = x_{11}(x_{12}a_{22}) = x_{11}(a_{11}x_{12}) = (x_{11}a_{11})x_{12};$
%\end{enumerate}
%\end{eqnarray*}

%\begin{eqnarray*}
%\begin{enumerate}
\item[\it(b)] $x_{12}(x_{22}a_{22}) = (x_{12}x_{22})a_{22} = a_{11}(x_{12}x_{22}) = (a_{11}x_{12})x_{22} = (x_{12}a_{22})x_{22} = x_{12}(a_{22}x_{22});$

\item[\it (c)] $(a_{22}x_{21})x_{12} = (a_{22}x_{21})x_{12} = (x_{21}x_{12})a_{22} = x_{21}(x_{12}a_{22}) = x_{21}(a_{11}x_{12}) = (x_{21}a_{11})x_{12},$

\end{enumerate}
%\end{eqnarray*}
for all $x_{12} \in \R_{12}$. As $\R$ is a $3$-torsion free prime alternative ring, by Theorem \ref{meu} we get
\begin{enumerate}
	\item $a_{11}x_{11} = x_{11}a_{11}$;
	\item $a_{22}x_{22} = x_{22}a_{22}$;
	\item $a_{22}x_{21} = x_{21}a_{11}.$
\end{enumerate}

Therefore for any $x \in \R$ with $x = x_{11} + x_{12} + x_{21} + x_{22}$, we obtain $[a_{11} + a_{22} , \R] = 0$. 
\end{proof}

As a last result of our paper follows the Corollaries, by Theorems \ref{mainthm} and \ref{mainthmd} and Lemma \ref{ultlemafd}. 

\begin{corollary}
Let $\mathfrak{R}$ be a $3$-torsion free prime alternative ring and $\mathfrak{R}'$ be another alternative
ring.
Suppose that $\mathfrak{R}$ is an alternative ring containing a nontrivial idempotent $e_1$. Then every Lie multiplicative bijection $\varphi$ of $\mathfrak{R}$ onto an arbitrary alternative ring $\mathfrak{R}'$ is almost additive.
\end{corollary}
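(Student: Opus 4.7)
The plan is very short because this final corollary is essentially a packaging statement: it combines the main Theorem \ref{mainthm} (almost additivity for Lie multiplicative bijections on an alternative ring $\mathfrak{R}$ containing a nontrivial idempotent $e_1$ satisfying the two centralizing conditions) with Lemma \ref{ultlemafd} (which shows that a $3$-torsion free prime alternative ring with a nontrivial idempotent automatically satisfies those two conditions). So there is no real computational content to produce here, only a verification that the hypotheses match.

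First I would observe that the hypotheses of the corollary are exactly that $\mathfrak{R}$ is a $3$-torsion free prime alternative ring with a nontrivial idempotent $e_1$, that $\mathfrak{R}'$ is alternative, and that $\varphi:\mathfrak{R}\to\mathfrak{R}'$ is a Lie multiplicative bijection. By Lemma \ref{ultlemafd}, under these hypotheses the ring $\mathfrak{R}$ satisfies both conditions $(i)$ and $(ii)$ of Theorem \ref{mainthm}: whenever $[a_{11}+a_{22},\mathfrak{R}_{12}]=0$ or $[a_{11}+a_{22},\mathfrak{R}_{21}]=0$, one has $a_{11}+a_{22}\in\mathcal{Z}(\mathfrak{R})$. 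So the full list of hypotheses of Theorem \ref{mainthm} is available.

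Then I would simply invoke Theorem \ref{mainthm} to conclude that $\varphi$ is almost additive, meaning for every $a,b\in\mathfrak{R}$ there exists $Z'_{a,b}\in\mathcal{Z}(\mathfrak{R}')$ with $\varphi(a+b)=\varphi(a)+\varphi(b)+Z'_{a,b}$. There is no obstacle: the whole corollary is a one-line consequence of the two previously established results, and the only thing to do is point out that the primeness plus $3$-torsion freeness are used solely to supply the centralizing conditions via Lemma \ref{ultlemafd}, after which Theorem \ref{mainthm} does all the work. The companion statement for Lie triple derivable maps (Theorem \ref{mainthmd}) would follow in exactly the same way, but is not requested here.
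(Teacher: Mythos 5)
Your proposal is correct and is exactly the paper's argument: the paper derives this corollary by combining Lemma \ref{ultlemafd} (primeness plus $3$-torsion freeness supply conditions \emph{(i)} and \emph{(ii)}) with Theorem \ref{mainthm}, precisely as you do. Nothing is missing; the corollary really is a one-line consequence of those two results.
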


\begin{corollary}
Let $\mathfrak{R}$ be a $3$-torsion free prime alternative ring. Suppose that $\mathfrak{R}$ is an alternative ring containing a nontrivial idempotent $e_1$. Then every Lie triple derivable multiplicative map $\D$ of $\mathfrak{R}$ into itself is almost additive.
\end{corollary}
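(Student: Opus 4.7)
The plan is to derive this corollary as an immediate consequence of Theorem \ref{mainthmd} combined with Lemma \ref{ultlemafd}. Theorem \ref{mainthmd} guarantees that every Lie triple derivable map on an alternative ring with a nontrivial idempotent $e_1$ is almost additive, provided the two centralizer conditions (i) and (ii) inherited from Theorem \ref{mainthm} hold. So the entire proof of this corollary reduces to showing that a $3$-torsion free prime alternative ring with such an idempotent automatically satisfies those two conditions.

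That verification has, however, already been carried out in Lemma \ref{ultlemafd}: under the assumptions of $3$-torsion freeness and primeness, whenever $[a_{11}+a_{22},\R_{12}]=0$ or $[a_{11}+a_{22},\R_{21}]=0$, one forces $a_{11}+a_{22}\in\mathcal{Z}(\R)$. Thus the hypotheses (i) and (ii) of Theorem \ref{mainthmd} are automatic in our setting, and Theorem \ref{mainthmd} applies to $\R$ verbatim, yielding the almost additivity of any Lie triple derivable multiplicative map $\D\colon\R\rightarrow\R$.

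There is essentially no obstacle in the proof of the corollary itself; the substantive work is entirely contained in Lemma \ref{ultlemafd} (where Proposition \ref{Lflexivel} and Theorem \ref{meu} are used to pass from primeness to the $a\R\cdot b=0$ cancellation property and then to exploit the various associator identities) and in Theorem \ref{mainthmd}. The only point requiring care is the terminology: a \emph{Lie triple derivable multiplicative map}, as defined in the introduction, is precisely the class of maps to which Theorem \ref{mainthmd} applies, so no additional verification is needed before invoking it.
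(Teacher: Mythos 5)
Your proposal is correct and matches the paper's own argument exactly: the corollary follows by applying Lemma \ref{ultlemafd} to verify that a $3$-torsion free prime alternative ring with a nontrivial idempotent satisfies conditions (i) and (ii), and then invoking Theorem \ref{mainthmd}. No gaps; this is precisely how the paper derives it.
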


\end{document}